\pdfoutput=1
\documentclass[10pt,letterpaper]{amsart}

\usepackage[utf8]{inputenc}
\usepackage{lmodern}
\usepackage{mathtools,amsfonts,amsthm,mathrsfs,amssymb}
\usepackage{bbm}
\usepackage[T1]{fontenc}

\usepackage{enumitem}
\usepackage[dvipsnames]{xcolor}
\usepackage{subfig}

\newenvironment{axioms}
{\begin{enumerate}[label=A\arabic*.,ref=A\arabic*]}
{\end{enumerate}}

\newcounter{condition}

\makeatletter
\newcommand{\axitem}[1][]{%
  \if\relax\detokenize{#1}\relax
    \item
  \else
    \item[#1]%
    \def\thecondition{#1}%
    \refstepcounter{condition}%
  \fi
}
\makeatother

\usepackage[margin=1in]{geometry}

\usepackage{svg}

\usepackage{textcomp}
\mathtoolsset{showonlyrefs}

\PassOptionsToPackage{pdfusetitle,%
    colorlinks=true,%
    pdfstartview=FitV,%
    linkcolor=blue,%
    citecolor=Green,%
    urlcolor=WildStrawberry,%
    linktoc=page%
    }{hyperref}
\usepackage{bookmark}
\usepackage[capitalise,noabbrev]{cleveref}
\usepackage{microtype}

\newtheorem{theorem}{Theorem}
\newtheorem{lemma}[theorem]{Lemma}
\newtheorem{proposition}[theorem]{Proposition}
\newtheorem{corollary}[theorem]{Corollary}

\theoremstyle{definition}
\newtheorem{definition}[theorem]{Definition}

\Crefname{theorem}{Theorem}{Theorems}
\Crefname{lemma}{Lemma}{Lemmas}
\Crefname{proposition}{Proposition}{Propositions}
\Crefname{corollary}{Corollary}{Corollaries}
\Crefname{conjecture}{Conjecture}{Conjectures}
\Crefname{remark}{Remark}{Remarks}
\Crefname{definition}{Definition}{Definitions}
\Crefname{figure}{Figure}{Figures}
\Crefname{enumi}{Item}{Items}
\Crefname{condition}{Condition}{Conditions}

\newcommand*{\RR}{\mathbb{R}}

\newcommand*{\TT}{\mathbb{T}}

\newcommand*{\cI}{\mathcal{I}}

\renewcommand{\epsilon}{\varepsilon}

\renewcommand{\subset}{\subseteq}

\newlist{enumerata}{enumerate}{1}
\setlist[enumerata]{label=\upshape{(\alph*)}}
\setlist[enumerate]{label=\upshape{(\roman*)}}

\title{First-order phase transitions in the hard-core model}
\date{\today}

\author[E.\ Davies]{Ewan Davies}
\address{Department of Computer Science, Colorado State University, Fort Collins, USA}
\email{research@ewandavies.org}

\author[J.S.\ Sandhu]{Juspreet Singh Sandhu}
\address{Department of Computer Science, Colorado State University, Fort Collins, USA}
\email{js.sandhu@colostate.edu}

\author[B.\ Tan]{Brian Tan}
\address{Department of Computer Science, Colorado State University, Fort Collins, USA}
\email{Brian.Tan@colostate.edu}

\begin{document}
\begin{abstract}
We give a rigorous proof that the hard-core model on a natural family of infinite graphs exhibits a first-order phase transition. 
\end{abstract}

\maketitle

\section{Introduction}

The hard-core model is a natural measure on the independent sets of a graph, with applications in physics as a model of a gas~\cite{vS94} and in models of communication networks~\cite{Kel85,Kel91}.
The hard-core model provides a simple example of a system with hard constraints:
given a graph $G=(V,E)$, we can think of the state space as the set of assignments of spins in $\{0,1\}$ to the vertices of $G$ such that no two adjacent vertices receive spin $1$. Equivalently, the state space is the set of independent sets of $G$.

Given a fixed finite graph $G$, there is an infinite family of hard-core models on $G$ parametrized by a vector $\lambda \in (0,\infty)^V$ (we exclude the possibility of coordinates equal to zero for mild technical convenience). That is, each vertex can have a parameter $\lambda_v$ which loosely controls how likely $v$ is to receive spin $1$.
In this work, we focus on the univariate case where each vertex has the same parameter $\lambda>0$, which we call the fugacity.
In this case, a finite graph has exactly one hard-core model associated with a given fugacity. 
One reason the hard-core model is well-studied in statistical physics is that for a fixed \emph{infinite} graph there can be multiple hard-core models with the same parameter $\lambda$. 
This leads us to the notion of a phase transition, our main interest. Before stating what we mean by this, we give a disclaimer by quoting Brightwell and Winkler~\cite{BW02}.
\begin{quote}
    There is no uniformity even among statistical physicists regarding the definition of a phase transition.
\end{quote}

\begin{definition}\label[definition]{def:phasetrans}
    If, for a given infinite graph $G$ and fugacity $\lambda>0$, there is only one hard-core measure on $G$ at fugacity $\lambda$, then we say $G$ has \emph{uniqueness} at $\lambda$. If there are multiple such measures, we say that $G$ has \emph{phase coexistence} at $\lambda$.
    If there exists some $\lambda^*=\lambda^*(G)$ such that $G$ has uniqueness for each $\lambda < \lambda^*$ and $G$ has phase coexistence for each $\lambda>\lambda^*$, then we say that a \emph{phase transition} occurs at $\lambda^*$, or more specifically that $\lambda^*$ is the \emph{Gibbs uniqueness threshold} for $G$.
\end{definition}

Leaving aside the question of how hard-core models are defined on infinite graphs, which is briefly discussed below, we point out that the simple scenario described by our definition may not exist. 
General considerations for Gibbs measures (see e.g.~\cite{Geo11}) mean that at least one hard-core measure must exist at a given $\lambda$, but even in cases where, as $\lambda$ varies, we can characterize precisely the regions of uniqueness and phase coexistence, we may not have a single phase transition as defined above.
Indeed, Brightwell, H\"aggstr\"om, and Winkler proved~\cite{BHW99} that there are infinite trees with values $0<\lambda_1<\lambda_2$ such that there is uniqueness for $\lambda\in[0,\lambda_1]\cup[\lambda_2,\infty)$ and phase coexistence for $\lambda\in(\lambda_1,\lambda_2)$.
It simplifies the exposition to restrict our attention to the strong kind of monotonicity implicitly assumed in the above definition, without significantly limiting our exploration of phase transitions and related phenomena. 

If $\mu$ is a hard-core model on a graph and $v$ is a vertex of the graph, then we write $\mu(v)$ for the marginal of $v$, which is the probability that $v\in I$ when $I\sim\mu$.
We are interested in the order of a phase transition, which we define as follows (cf.~\cite{GMRT11} for a similar definition in the case of vertex-transitive graphs). 

\begin{definition}\label[definition]{def:orders}
    Suppose that $G$ is an infinite graph such that a phase transition occurs at $\lambda^*>0$. 
    For $\lambda>0$ and a vertex $u$ of $G$, let $D_{G,u}(\lambda)$ be defined as 
    \[ D_{G,u}(\lambda) = \sup_{\mu,\nu}\left| \mu(u) - \nu(u) \right|, \]
    where the supremum is over all pairs of hard-core measures $\mu,\nu$ on $G$ at fugacity $\lambda$.
    Suppose also that for each vertex $u$, the one-sided limit from above $\ell_{G,u}(\lambda^*) = \lim_{\lambda\downarrow \lambda^*} D_{G,u}(\lambda)$ exists.
    
    If there exists a vertex $u\in V(G)$ such that $\ell_{G,u}(\lambda^*)>0$, then we say that $\lambda^*$ is a \emph{first-order} phase transition.
    If, on the other hand, $\ell_{G,u}(\lambda^*)=0$ for all $u\in V(G)$, then we say that $\lambda^*$ is a \emph{second-order} phase transition.
\end{definition}

One intuition behind classifying phase transitions as first- or second-order is the continuity of marginals in the model as one thinks of increasing $\lambda$ across the phase transition.
This is not a rigorous notion, however, as there is no way of knowing or determining which of the multiple hard-core measures one should take at and beyond the threshold.  
For a second-order phase transition, the intuition is that whichever measure one chooses as $\lambda$ crosses the threshold $\lambda^*$, the marginals of every vertex are continuous.
But for a first-order phase transition it is possible to choose a trajectory through the measures such that one can observe a discontinuity in a marginal. 
It is possible to make this intuition precise subject to a structural condition on the graph. For a so-called amenable graph (one where boundaries do not grow too fast; see e.g.~\cite{BLPS99}), one can define a free energy density for all $\lambda$ and study phase transitions and their orders from the perspective of non-analytic points and their order.
While this is an interesting perspective, we study non-amenable graphs in this work and thus cannot work with free energies in this way.

Perhaps the most well-known phase transition in the context of the hard-core model is on the infinite $\Delta$-regular tree~\cite{Kel85,Spi75}. There, for $\Delta\ge 3$, the transition occurs at
\begin{equation}
    \lambda_c(\Delta) := \frac{(\Delta-1)^{\Delta-1}}{(\Delta-2)^\Delta}
\end{equation}
and is second-order (see e.g.~\cite{GMRT11}).
This phase transition is interesting in computer science as it marks the threshold at which the problem of approximately sampling from the hard-core model on a finite graph of maximum degree $\Delta$ goes from polynomial-time computable to RP-hard~\cite{GSV16,Sly10,SS12,Wei06}.
It also corresponds to a \#BIS-hardness threshold in bipartite graphs~\cite{CGG+14}.

Our main result is that for integers $\Delta>\delta\ge 3$, the infinite $(\Delta,\delta)$-biregular tree, a countably infinite tree in which every edge lies between a vertex of degree $\Delta$ and a vertex of degree $\delta$, undergoes a first-order phase transition. The precise location of the threshold is the solution of an explicit equation involving $\Delta$ and $\delta$.

\begin{theorem}
    For all integers $\Delta>\delta\ge 3$, there is a first-order phase transition in the hard-core model on the infinite $(\Delta,\delta)$-biregular tree $\TT_{\Delta,\delta}$.
\end{theorem}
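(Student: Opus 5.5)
The plan is to reduce everything to fixed points of an explicit increasing one-dimensional map coming from the tree recursion. Then we show that the non-trivial fixed points are born by a saddle-node (tangency) bifurcation at a positive distance from the "central" fixed point, rather than by a period-doubling/pitchfork bifurcation out of it as on the regular tree.

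\emph{Step 1 (recursion and extremal measures).} Write $x$ for the occupation ratio $\mu(v\in I)/\mu(v\notin I)$ at a degree-$\Delta$ vertex, computed in the subtree below it (which has $\Delta-1$ children of degree $\delta$). Write $y$ for the analogous ratio at a degree-$\delta$ vertex. The standard tree recursion gives
\[
x=\frac{\lambda}{(1+y)^{\Delta-1}},\qquad y=\frac{\lambda}{(1+x)^{\delta-1}}.
\]
Composing two levels gives the map $f_\lambda(x)=\lambda\bigl(1+\lambda(1+x)^{-(\delta-1)}\bigr)^{-(\Delta-1)}$, which is increasing and bounded on $[0,\infty)$.

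Use the usual monotonicity argument for the hard-core model on bipartite graphs. Take the two boundary conditions ``all even-depth boundary vertices occupied'' and ``all odd-depth boundary vertices occupied''. These give ratios at the root that are $f_\lambda^n(0)$ and $f_\lambda^n(\infty)$ (up to one extra step). They converge monotonically to the smallest and largest fixed points $x_-(\lambda)\le x_+(\lambda)$ of $f_\lambda$. They also sandwich every Gibbs measure, since these are the extremal measures. Hence:
\begin{itemize}
\item uniqueness holds iff $f_\lambda$ has a unique fixed point;
\item $D_{\TT,u}(\lambda)$ is a continuous function of $x_+(\lambda)-x_-(\lambda)$ that is positive when this gap is positive. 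For a degree-$\Delta$ root $u$ it is $\mu(u)=X/(1+X)$ with $X=\lambda(1+y)^{-\Delta}$, using the full degree.
\end{itemize}

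\emph{Step 2 (geometry of the fixed-point curve).} Fixed points correspond to pairs $(x,y)$ on the curve
\[
\mathcal{C}=\{(x,y)\in(0,\infty)^2 : y(1+x)^{\delta-1}=x(1+y)^{\Delta-1}\},
\qquad \lambda=x(1+y)^{\Delta-1}.
\]
I would parametrize $\mathcal{C}$, for instance by $t=\log(1+x)$, or by solving for $\lambda$ in terms of $x$ along each branch. The goal is to show:
\begin{enumerate}
\item $\mathcal{C}$ has a ``central'' branch on which $\lambda$ is strictly increasing from $0$ to $\infty$. This branch gives, for every $\lambda$, the fixed point that is the limit of the symmetric regime.
\item The remaining part of $\mathcal{C}$ is a branch on which $\lambda$ is unimodal: it decreases to a minimum value $\lambda^*>0$ and then increases to $\infty$. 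Points on this branch give pairs of additional fixed points.
\item The two branches do not meet.
\end{enumerate}
It follows that $f_\lambda$ has exactly one fixed point for $\lambda<\lambda^*$ and at least three (generically exactly three) for $\lambda>\lambda^*$. This already gives a phase transition at $\lambda^*$ in the sense of \cref{def:phasetrans}. A Descartes/convexity argument on $x\mapsto f_\lambda(x)-x$, showing it has at most three zeros, would back up the count.

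\emph{Step 3 (first order).} As $\lambda\downarrow\lambda^*$, the non-central fixed points converge to the tangency point $(x^\dagger,y^\dagger)$ on the extra branch. The central fixed point stays at its own location $x_c(\lambda^*)\neq x^\dagger$. Therefore $x_+(\lambda)-x_-(\lambda)\ge |x^\dagger-x_c(\lambda^*)|/2>0$ in the limit, so $\ell_{\TT,u}(\lambda^*)>0$. The limit exists because the fixed points vary continuously (indeed monotonically) in $\lambda$ for $\lambda>\lambda^*$.

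\emph{Main obstacle.} The crux is item (iii) of Step 2: proving that the central branch never coalesces with the other fixed points. Equivalently, $f_\lambda'(x_c)<1$ at the central fixed point for all $\lambda\le\lambda^*$, so that the bifurcation is not the second-order one seen when $\Delta=\delta$. Here the hypothesis $\Delta>\delta$ must enter. I would first compute $f'_\lambda(x_c)=(\Delta-1)(\delta-1)\frac{x_c y_c}{(1+x_c)(1+y_c)}$. I would then show that the value $\lambda_1$ at which this reaches $1$ on the central branch strictly exceeds $\lambda^*$, by exhibiting an explicit point of the extra branch with $\lambda<\lambda_1$. An alternative is to show directly that the tangency condition $f'=1$ on $\mathcal{C}$ has two solutions, with the central one at larger $\lambda$.
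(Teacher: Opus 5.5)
Your framework can work, but it never supplies the step that carries the proof, and the step you call the crux is the easy one. Write $a=\delta-1$, $b=\Delta-1$. A Lagrange computation shows that the critical points of $\lambda=x(1+y)^{b}$ restricted to $\mathcal{C}$ are exactly the tangencies $ab\,xy=(1+x)(1+y)$, i.e.\ fixed points with $f'_\lambda=1$.

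Item (iii) is automatic. $\mathcal{C}$ is the level set $x(1+x)^{-a}=y(1+y)^{-b}$, and $\max_{t>0}t(1+t)^{-k}=(k-1)^{k-1}/k^k$ strictly decreases in $k$. So $\mathcal{C}$ splits into two components, with $x_1<1/(a-1)<x_2$: one in $\{x\le x_1\}$ with ends $(x,y)\to(0,0)$ and $(x,y)\to(0,\infty)$, and one in $\{x\ge x_2\}$ on which $\lambda\ge x\to\infty$ at both ends. Granting the at-most-three-fixed-points bound you mention, for each $\lambda>\lambda^*$ the second component carries exactly two fixed points (so $\lambda$ is unimodal on it) and the central component exactly one.

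What remains, and what you give no argument for, is item (i) for $\lambda\le\lambda^*$: the central component must contain no tangency there. Otherwise $\lambda$ could have a local maximum and minimum on it below $\lambda^*$. That would produce a window of coexistence followed by a return to uniqueness, hence no threshold in the sense of \Cref{def:phasetrans}, and the three-fixed-point bound cannot exclude it. This is where $\Delta>\delta$ enters beyond the trivial separation, and it is the heart of \Cref{lem:bifpoint}. In the paper's coordinate $r$ (your $y$), eliminating $\lambda$ turns tangencies into roots of $Q(r)=(ab)^a$. The tangencies on your central component are exactly the roots with $r>1/(b-1)$: at a tangency $ab\,\frac{x}{1+x}\cdot\frac{y}{1+y}=1$, so $x<1/(a-1)$ forces $y>1/(b-1)$. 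The paper excludes these roots by showing that the local minimum of $Q$ at $(a+1)/(ab-1)$ exceeds $(ab)^a$, i.e.\ $K(a,b)>0$.

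Your proposed attack on the crux also rests on a wrong picture. You posit a finite $\lambda_1$ at which $f'_\lambda(x_c)$ reaches $1$ on the central branch, and your alternative predicts two solutions of $f'=1$ on $\mathcal{C}$. In fact there is exactly one, with $y=r_c<1/(b-1)$, and it lies on the non-central component. Along the entire central branch $f'_\lambda<1$ (it tends to $0$ at both ends and never equals $1$). So there is no $\lambda_1$ to compare against, and any lower bound on it is the missing tangency-free statement itself; exhibiting a single point of the extra branch cannot supply it. Once that statement is proved, however, your component picture gives the jump more cleanly than the paper's local saddle-node analysis plus implicit-function continuation, through the uniform gap $x_+(\lambda)-x_-(\lambda)\ge x_2-x_1$ for $\lambda>\lambda^*$.

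Two smaller corrections:
\begin{itemize}
\item ``Indeed monotonically'' is false. Both ends of the central component have $x\to0$, so $x_-(\lambda)$ rises and then falls. Continuity suffices for the one-sided limit, and it follows from the branch structure.
\item $D_{\TT,u}$ depends on $x_-$ and $x_+$ separately, not only on their difference.
\end{itemize}
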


\section{Preliminaries}

We begin with a brief overview of the background necessary to understand the hard-core model on finite and infinite graphs.

\begin{definition}\label[definition]{def:pf}
    For a graph $G=(V,E)$, let $\cI(G)=\{I\subset V : \forall u,v\in I, uv\notin E\}$ be the set of independent sets in $G$ (including the empty set). If $G$ is finite, then for a formal parameter $\lambda$, the \emph{partition function} $Z_G(\lambda)$ is defined by
    \[ Z_G(\lambda) = \sum_{I\in \cI(G)}\lambda^{|I|}.\]
\end{definition}

\subsection{The hard-core model on finite graphs}

\begin{definition}\label[definition]{def:finitebasics}
    For a finite graph $G$ and $\lambda\ge 0$, the \emph{hard-core model on $G$ at fugacity $\lambda$} is the probability measure $\mu_{G,\lambda}$ on $\cI(G)$ such that
    \[ \mu_{G,\lambda}(I) = \frac{\lambda^{|I|}}{Z_G(\lambda)}. \]
    
    When the graph and fugacity are clear from context, we consider $I\sim\mu_{G,\lambda}$, identify a vertex $u$ with the indicator random variable for the event $u\in I$, and write $\overline{u}$ for the indicator of $u\notin I$. Then $\mu_{G,\lambda}(u)$ is the marginal of $u$ and $\mu_{G,\lambda}(\overline{u})=1-\mu_{G,\lambda}(u)$.
    
    In keeping with statistical physics terminology, we refer to the state of a vertex $u$ being in or out of an independent set as the \emph{spin} of $u$. One can think of an independent set $I\in\cI(G)$ as an assignment of spins in $\{0,1\}$ to the vertices of $G$ such that vertices in $I$ receive spin $1$ and vertices in the complement of $I$ receive spin $0$.
    For convenience when working with this formulation, we write $\Omega(G)$ for the set of functions $\sigma:V(G)\to\{0,1\}$ such that $\sigma^{-1}(1)\in\cI(G)$.
\end{definition}

\subsection{Infinite graphs}

In a graph $G=(V,E)$, for a subset $U\subset V$, we define the boundary $\partial U$ as
\[ \partial U = \{v\in V\setminus U : uv\in E \text{ for some } u\in U\}. \]

The hard-core model on a finite graph $G=(V,E)$, as an example of a spin system or a Gibbs measure, enjoys something called the \emph{spatial Markov property}. 
This is the property that for every subset $U\subset V$, conditioned on the spins of the boundary $\partial U$, the spins inside $U$ and outside $U$ are independent.
One can prove this for finite graphs in an elementary fashion from the definition of the hard-core model given above; see e.g.~\cite{DK25}.
The way we extend the definition of the model to infinite graphs is to require that for any finite subset $U$, once the spins are known outside of $U$, the model inside $U$ matches the finite hard-core model conditioned on the spins of the boundary $\partial U$. This is the Dobrushin--Lanford--Ruelle (DLR) approach~\cite{Dob68a,LR69}; see e.g.~\cite[Chap.~6]{FV17} for the details.

\subsection{Dynamical systems}

The study of phase transitions on trees is intimately related to dynamical systems arising from the iteration of rational maps.
We use standard terminology; see for instance~\cite{ASY97,Bac22}.

Let $A\subset \RR\cup\{\infty\}$ and let $f:A\to A$ be a differentiable function. 
We write $f^{\circ n}$ for the $n$-th iterate of $f$ in the sense that $f^{\circ 1}=f$ and $f^{\circ n} = f\circ f^{\circ n-1}$.

A point $x\in A$ such that $x=f(x)$ is called a fixed point of $f$. 
A fixed point $x$ is called \emph{attracting} if $|f'(x)|<1$, and the \emph{basin of attraction} is the largest subset $B\subset A$ such that, for all $y\in B$, $f^{\circ n}(y)\to x$ as $n\to\infty$.
A fixed point $x$ is called \emph{repelling} if $|f'(x)|>1$. 

A point $x\in A$ is called \emph{periodic} if there exists a finite $k\ge 1$ such that $f^{\circ k}(x)=x$, and the least such $k$ is called the \emph{period} of $x$.
A \emph{periodic orbit} is the (finite) set $\{f^{\circ n}(x) : n\ge 1\}$ for a periodic point $x$. 
The periodic orbit of the point $x\in A$ of period $k$ is called \emph{attracting} if $x$ is an attracting fixed point of $f^{\circ k}$ (and similarly for repelling). The basin of attraction of a periodic orbit is defined analogously.

Given a fixed fugacity $\lambda>0$, the occupation ratio $R_{G,v}$ of a vertex $v$ in a finite graph $G$ is given by $\mu_{G,\lambda}(v)/\mu_{G,\lambda}(\overline{v})$. Then
\[ \mu_{G,\lambda}(v) = \frac{R_{G,v}}{1+R_{G,v}}, \]
and we freely move between coordinates for the marginals and the ratios for convenience. 
In an infinite graph, the same definition applies, but one must specify the hard-core measure used to compute the ratio when multiple measures exist.
The following proposition demonstrates that ratios can be computed efficiently on trees by a recursion. Given a tree, designate the desired vertex as the root. Since a single vertex has ratio $\lambda$ in the one-vertex graph containing it, one starts the recursion with this value at the leaves and uses the proposition to compute the ratio at the root of the entire tree.

\begin{proposition}\label[proposition]{prop:rationbrs}
   For any finite tree $T$ rooted at $v \in V(T)$, the occupation ratio under $\mu_{T,\lambda}$ satisfies
   \[R_{T, v} = \frac{\lambda}{\prod_{i = 1}^d \left(1+ R_{T_i, v_i}\right)},\]
   where $(v_1, \cdots, v_d)$ are neighbors of $v$, and $T_i$ is the subtree rooted at $v_i$. 
\end{proposition}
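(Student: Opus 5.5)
The claim is the recursion of \cref{prop:rationbrs}, and I will prove it directly from the definition of $\mu_{T,\lambda}$ by grouping independent sets according to whether they contain $v$. Removing $v$ from $T$ leaves the disjoint union of the subtrees $T_1,\dots,T_d$. Since $T$ is a tree, there are no edges between different subtrees.

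First I compute the two relevant sums.
\begin{itemize}
\item[(a)] Independent sets not containing $v$ are exactly the sets $I_1\cup\dots\cup I_d$ with $I_i\in\cI(T_i)$ chosen freely. Since $\lambda^{|I|}$ factorizes over this disjoint union, the total weight of such sets is $\prod_{i=1}^d Z_{T_i}(\lambda)$.
\item[(b)] Independent sets containing $v$ are exactly the sets $\{v\}\cup I_1\cup\dots\cup I_d$ with $I_i\in\cI(T_i - v_i)$. Their total weight is therefore $\lambda\prod_{i=1}^d Z_{T_i-v_i}(\lambda)$.
\end{itemize}

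Dividing (b) by (a), the common factor $Z_T(\lambda)$ in the normalization cancels, and I get
\[ R_{T,v}=\lambda\prod_{i=1}^d \frac{Z_{T_i-v_i}(\lambda)}{Z_{T_i}(\lambda)}. \]

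Next I interpret each factor. Within $T_i$, the weight of independent sets avoiding $v_i$ is $Z_{T_i-v_i}(\lambda)$. Hence this factor equals $\mu_{T_i,\lambda}(\overline{v_i})$, which is $1/(1+R_{T_i,v_i})$ by the relation $\mu(v)=R/(1+R)$. Substituting this into the product gives the stated formula.

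There is no genuine obstacle here. The only points needing care are the bijections in (a) and (b), where the tree structure guarantees that the constraints from different subtrees are independent. When $d=0$ the product is empty and $R_{T,v}=\lambda$, which matches the base case described in the text.
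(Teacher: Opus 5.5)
Your proof is correct and complete. Splitting $\cI(T)$ by whether $v\in I$, factorizing both sums over the components $T_1,\dots,T_d$ of $T-v$, and identifying $Z_{T_i-v_i}(\lambda)/Z_{T_i}(\lambda)=\mu_{T_i,\lambda}(\overline{v_i})=1/(1+R_{T_i,v_i})$ is the standard argument. The paper does not prove the proposition itself; it cites it as well known and points to a generalization in the literature, and your route is the usual way to establish it. One detail worth stating explicitly is that for $\lambda>0$ every partition function involved is at least $1$, because the empty set contributes $1$, so all your divisions are legitimate.
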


The proposition is a well-known fact that has been used extensively in this field (see e.g.~\cite[Lem.~2.2]{PetReg19} for a proof of a generalization). In the special case of $d$-ary trees truncated to some fixed depth, every vertex at a given depth will have the same ratio. This lets us study the rational map $f_{d,\lambda}(r) = \lambda / (1+r)^d$, where iterating one step of the dynamical system is equivalent to increasing the depth of the truncated tree by one. Analogous reasoning applies to $(a,b)$-ary trees, where we can instead study $f_{a,\lambda} \circ f_{b,\lambda}$.

\section{Regular trees}

In this section, we give a short proof of a well-known result usually attributed to Kelly~\cite{Kel85,Kel91} (see also Spitzer~\cite{Spi75} and related early work on phase transitions on trees), though interestingly the phase transition demonstrated below arose in Kelly's work on loss networks: graph-based models of telephone communication networks that generalize the hard-core model.
We add some details on the nature of the phase transition (which are also well-known observations; see e.g.~\cite{GMRT11}).
As we will study a broader class of trees than the commonly considered $\Delta$-regular and $d$-ary trees, we state and prove slightly more general versions of standard facts from various works, e.g.~\cite{GMRT11,Spi75,Zac83,Zac85}.

\begin{theorem}\label[theorem]{thm:regtree}
    For $\Delta\ge 3$, the value
    \[ \lambda_c(\Delta)=\frac{(\Delta-1)^{\Delta-1}}{(\Delta-2)^\Delta} \]
    is the Gibbs uniqueness threshold for the hard-core model on the infinite $\Delta$-regular tree $\TT_\Delta$ in the sense that
    \begin{enumerate}
        \item for $\lambda< \lambda_c(\Delta)$, $\TT_\Delta$ has uniqueness at $\lambda$, and
        \item for $\lambda> \lambda_c(\Delta)$, $\TT_\Delta$ has phase coexistence at $\lambda$.
    \end{enumerate}
    Further, the phase transition on $\TT_\Delta$ at $\lambda_c(\Delta)$ is second-order in the sense of \Cref{def:orders}.
\end{theorem}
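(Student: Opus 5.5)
The plan is to reduce everything to the one-dimensional dynamics of $f(r)=f_{d,\lambda}(r)=\lambda/(1+r)^d$ with $d=\Delta-1$. Fix a root $u$ of $\TT_\Delta$ and consider the ball of radius $n$ around $u$. By \Cref{prop:rationbrs} and the monotonicity of the hard-core model in boundary conditions, two boundary conditions are extremal: the all-unoccupied one and the "maximal" one. Their ratios at a depth-$k$ vertex below the root are obtained by iterating $f$ from $0$ (respectively from $\infty$, i.e.\ from $\lambda$ after one step).

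Since $f$ is decreasing, $f\circ f$ is increasing. Hence the sequences $f^{\circ 2n}(0)$ and $f^{\circ 2n+1}(0)$ are monotone and converge to fixed points $r_-\le r_+$ of $f\circ f$ with $f(r_-)=r_+$. The standard sandwich argument (every Gibbs measure's conditional root marginal lies between the two extremal finite-volume values) shows that uniqueness holds iff $r_-=r_+$. Both values are then the unique fixed point $r^*$ of $f$, which exists because $f$ is strictly decreasing from $\lambda$ to $0$. The root itself has $\Delta$ children, so its ratio is $\lambda/(1+r)^\Delta$ evaluated at the relevant limits. More precisely, the two extremal measures are realised as weak limits along even and odd $n$, and I would take for granted from the DLR framework that these limits are Gibbs measures.

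The core step, which I expect to be the main obstacle, is showing that $f\circ f$ has a fixed point other than $r^*$ iff $|f'(r^*)|>1$.

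For the forward direction: if $|f'(r^*)|>1$, then $(f\circ f)'(r^*)>1$. Since $f\circ f(0)>0$ and $f\circ f$ is bounded, an intermediate value argument gives fixed points of $f\circ f$ on both sides of $r^*$, hence a genuine 2-cycle and $r_-<r_+$.

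For the converse I would parametrize a 2-cycle $r\ne s$ by $r(1+s)^d=\lambda=s(1+r)^d$. This gives
\[ \frac{\log r-\log s}{\log(1+r)-\log(1+s)}=d. \]
Setting $r=e^x$, $s=e^y$, the left side is a secant slope of the strictly convex function $x\mapsto\log(1+e^x)$. A convexity/mean-value comparison should show that such a pair forces the tangent-slope condition $d\,r^*/(1+r^*)>1$ at the symmetric point. If that is awkward, I would fall back on showing that $f\circ f(r)-r$ has at most three zeros, for example by a Descartes/convexity count on the equivalent polynomial-type equation.

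Computing $f'(r^*)=-d\,r^*/(1+r^*)$, the condition $|f'(r^*)|\le1$ becomes $r^*\le 1/(d-1)$. Since $\lambda=r^*(1+r^*)^d$ is increasing in $r^*$, this is equivalent to
\[ \lambda\le \frac{1}{d-1}\Bigl(\frac{d}{d-1}\Bigr)^d=\frac{(\Delta-1)^{\Delta-1}}{(\Delta-2)^{\Delta}}=\lambda_c(\Delta). \]
This gives items (i) and (ii).

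For the second-order claim, $D_{\TT_\Delta,u}(\lambda)$ is the difference of root marginals under the two extremal measures. By transitivity it suffices to consider the root, where the ratios are $\lambda/(1+r_\mp)^\Delta$. As $\lambda\downarrow\lambda_c$, the pairs $(r_-(\lambda),r_+(\lambda))$ stay in $[0,\lambda]$. Any subsequential limit is a 2-cycle of $f_{d,\lambda_c}$, by continuity of $(\lambda,r)\mapsto f_{d,\lambda}(f_{d,\lambda}(r))$. By the core step at $\lambda=\lambda_c$ this 2-cycle is $\{r^*\}$. So $r_\pm\to r^*(\lambda_c)$, the limit $\ell_{\TT_\Delta,u}(\lambda_c)$ exists and equals $0$, and the transition is second-order.
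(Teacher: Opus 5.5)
You have a genuine gap in the converse of your core step: no $2$-cycle when $|f'(r^*)|\le 1$. That step carries both item (i) and the second-order claim. The rest is fine: the reduction, the intermediate-value argument for $|f'(r^*)|>1$, and the threshold computation. Your compactness argument for second order is also correct, and it is cleaner than the paper's pitchfork-bifurcation route, because it only needs the absence of a $2$-cycle at $\lambda=\lambda_c(\Delta)$ itself.

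The secant-slope idea does not deliver the converse. Strict convexity of $\phi(x)=\log(1+e^x)$ applied to $\frac{\phi(x)-\phi(y)}{x-y}=\frac1d$ only gives $\phi'(y)<\frac1d<\phi'(x)$, i.e.\ $s<\frac1{d-1}<r$. You only know that $r^*$ lies somewhere in $(s,r)$, and the secant relation says nothing about which side of $\frac1{d-1}$ it falls on. What is needed is further information about the shape of $g=f\circ f$.

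Your fallback is also incomplete as stated. ``At most three zeros'' of $g(r)-r$ is not enough if zeros are counted without multiplicity. When $g'(r^*)<1$, the points $s$ and $r$ could a priori be tangential zeros, so three distinct zeros give no contradiction. You also give no reason why a sign-change count on the degree-$(d^2+1)$ polynomial $\lambda(1+r)^{d^2}-r\bigl((1+r)^d+\lambda\bigr)^d$ would be at most three for general $d$.

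The missing ingredient is the $S$-shape of $g$ that the paper's sketch relies on, and it is easy to check directly. With $w=(1+r)^d$,
\[ g'(r)=d^2\lambda^2\left(\frac{w^{(d-1)/d}}{w+\lambda}\right)^{d+1}. \]
This is strictly increasing in $w$ for $w<(d-1)\lambda$ and strictly decreasing for $w>(d-1)\lambda$. Hence on $[0,\infty)$, $g'$ strictly increases and then strictly decreases in $r$ (the increasing part may be empty).

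Now suppose $s<r^*<r$ are fixed points of $g$. Rolle's theorem gives $\xi_1\in(s,r^*)$ and $\xi_2\in(r^*,r)$ with $g'(\xi_1)=g'(\xi_2)=1$. Unimodality then forces $g'>1$ on $(\xi_1,\xi_2)$, so $f'(r^*)^2=g'(r^*)>1$.

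This rules out a $2$-cycle for every $\lambda\le\lambda_c(\Delta)$. That includes $\lambda=\lambda_c(\Delta)$, where $f'(r^*)^2=1$, which is exactly the case your compactness argument for the second-order claim needs.
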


Let $G=(A\cup B,E)$ be a bipartite graph.
Given $\sigma,\tau \in \Omega(G)$, we say that $\sigma\prec_G \tau$ if $\sigma(v)\le \tau(v)$ for all vertices $v\in A$ and $\sigma(w)\ge \tau(w)$ for all vertices $w\in B$.

For a probability measure $\mu$ on $\Omega(G)$ and a function $f:\Omega(G)\to\mathbb{R}$, we let 
\[ \mu(f) = \sum_{\sigma\in\Omega(G)}f(\sigma)\mu(\sigma) \]
be the expectation of $f$ under $\mu$. 

Given two probability measures $\mu,\nu$ on $\Omega(G)$, we say that $\mu\prec_G\nu$ if $\mu(f)\le\nu(f)$ for any (bounded) function $f$ that is non-decreasing with respect to the partial order $\prec_G$ on $\Omega(G)$. 
\begin{lemma}[Monotonicity of boundary conditions on finite bipartite graphs]\label[lemma]{lem:boundarymono}
    Let $G=(A\cup B,E)$ be a finite bipartite graph, and for some fixed $\lambda>0$ write $\nu$ for $\mu_{G,\lambda}$ and $\prec$ for $\prec_G$. For a set $L$ contained in one bipartition class and an assignment $\tau\in\{0,1\}^L$, write $\nu^{L,\tau}$ for $\nu$ conditioned on the event $\sigma|_L=\tau$. Then
    \begin{enumerate}
        \item if $L\subset A$, then $\nu^{L,\mathbf{0}}\prec \nu^{L,\tau}\prec \nu^{L,\mathbf{1}}$, and
        \item if $L\subset B$, then $\nu^{L,\mathbf{1}}\prec \nu^{L,\tau}\prec \nu^{L,\mathbf{0}}$,
    \end{enumerate}
    where $\mathbf{0}$ and $\mathbf{1}$ denote the all-zero and all-one assignments, respectively.
\end{lemma}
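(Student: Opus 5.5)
The plan is to reduce the lemma to the FKG/Holley inequality for the partial order $\prec$ on $\Omega(G)$. First I would flip spins on $B$: define $\phi:\Omega(G)\to\{0,1\}^V$ by $\phi(\sigma)(v)=\sigma(v)$ for $v\in A$ and $\phi(\sigma)(w)=1-\sigma(w)$ for $w\in B$. Under $\phi$, the order $\prec$ becomes the coordinatewise order on $\{0,1\}^V$. The image $\phi(\Omega(G))$ is the set of $\eta$ with no edge $vw$ ($v\in A$, $w\in B$) such that $\eta(v)=1$ and $\eta(w)=0$, i.e.\ $\eta(v)\le\eta(w)$ on every edge. This set is closed under coordinatewise max and min, so it is a distributive sublattice. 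Its pushforward weight is $\nu(\eta)\propto\lambda^{|\eta^{-1}(1)\cap A|+|(1-\eta)^{-1}(1)\cap B|}\mathbf{1}[\eta\in\phi(\Omega)]$.

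This weight is log-modular on the lattice, since $|\eta\vee\xi\cap S|+|\eta\wedge\xi\cap S|=|\eta\cap S|+|\xi\cap S|$ for both $S=A$ and $S=B$ counts. The indicator of a sublattice satisfies the lattice condition. Hence $\nu$ satisfies the FKG lattice condition
\[\nu(\eta\vee\xi)\,\nu(\eta\wedge\xi)\ge \nu(\eta)\,\nu(\xi),\]
with equality on the sublattice.

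Next I would treat the case $L\subset A$. Conditioning on $\sigma|_L=\tau$ corresponds to restricting to $\{\eta:\eta|_L=\tau\}$. For $\tau\le\tau'$ coordinatewise, I would check Holley's condition for $\mu_1=\nu^{L,\tau}$ and $\mu_2=\nu^{L,\tau'}$:
\[\mu_2(\eta\vee\xi)\,\mu_1(\eta\wedge\xi)\ge\mu_1(\eta)\,\mu_2(\xi).\]
If $\eta|_L=\tau$ and $\xi|_L=\tau'$, then $(\eta\vee\xi)|_L=\tau'$ and $(\eta\wedge\xi)|_L=\tau$, so the supports match. The inequality then follows from the lattice condition for the unconditioned weight, with the normalizing constants cancelling. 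Holley's inequality yields $\nu^{L,\tau}\prec\nu^{L,\tau'}$, and taking $\tau=\mathbf 0$ or $\tau'=\mathbf 1$ gives (i). For $L\subset B$, the map $\phi$ flips the coordinates on $L$, so $\sigma|_L=\mathbf 1$ corresponds to $\eta|_L=\mathbf 0$ and vice versa. The same argument then gives (ii) with the order reversed.

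One subtlety is that some conditioning events may have probability zero. This cannot happen: $\tau$ is supported on an independent set because $L$ lies in one part, and $\lambda>0$. I would also check that Holley's inequality holds on a distributive lattice rather than the full cube. One option is to extend the measures by zero to $\{0,1\}^V$, where the Holley condition still holds because the support is closed under $\vee$ and $\wedge$. The main obstacle is verifying that the lattice structure survives the hard-core constraint, i.e.\ that $\phi(\Omega(G))$ is closed under $\vee$ and $\wedge$. This is exactly where bipartiteness is used.
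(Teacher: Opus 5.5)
Your proposal is correct and follows the route the paper indicates: the paper gives no proof of its own, describing the lemma as a consequence of the FKG inequality and citing van den Berg--Steif, and your argument (flip the spins on $B$, check the lattice condition on the sublattice $\phi(\Omega(G))$, then apply Holley's inequality to the conditioned measures) is the standard way to make that precise. The one point to pin down is that your measures, once extended by zero, are not strictly positive on $\{0,1\}^V$, so you should invoke a version of Holley's inequality that does not assume positivity, for instance by deriving it from the Ahlswede--Daykin four functions theorem on the distributive lattice $\phi(\Omega(G))$.
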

\Cref{lem:boundarymono} is a consequence of the FKG inequality. A proof of the corresponding monotonicity statement appears in a paper of van den Berg and Steif~\cite[Lem.~3.1]{vS94}, and an alternative proof in a slightly more restricted context is given by Galvin, Martinelli, Ramanan, and Tetali~\cite{GMRT11}.

Now, suppose that $T$ is an infinite, locally finite tree rooted at $u$, and let $T_k$ be the tree obtained by truncating $T$ at depth $k$.
Write $V(T)=A\cup B$ for the bipartition with $u\in A$, and let $L(k)$ be the set of leaves at depth $k$ in $T_k$.
Then, by the monotonicity established in \Cref{lem:boundarymono}, the limits
\begin{align*}
    \mu^0 &= \lim_{k\to\infty}\nu^{L(2k),\mathbf{0}}_{T_{2k}}&
    &\text{and}&
    \mu^1 &= \lim_{k\to\infty}\nu^{L(2k),\mathbf{1}}_{T_{2k}}
\end{align*}
exist as local weak limits and are hard-core measures on $T$~\cite[Lem.~3.2]{vS94}. For any other hard-core measure $\mu$ at fugacity $\lambda$ on $T$, conditioning on the spins at depth $2k$ and applying \Cref{lem:boundarymono} gives
\[ \mu^0 \prec_T \mu \prec_T \mu^1. \]
This reduces the problem of determining uniqueness to the problem of determining when $\mu^0=\mu^1$. Indeed, $\mu^0=\mu^1$ is equivalent to uniqueness on $T$ (see e.g.~\cite{Spi75,vS94,Zac83,Zac85}).
The fact $\mu^0 \prec_T \mu \prec_T \mu^1$ above implies that the measures $\mu^0$ and $\mu^1$ are extremal in the sense that they minimize and maximize the marginal of the root of the tree, respectively\footnote{Thus, they are extremal in the conventional sense that they cannot be written as a nontrivial convex combination of hard-core measures on $T$, but we do not need this concept here.}.
For the ratio recursion on a tree, the all-vacant and all-occupied boundary conditions correspond to initializing the leaf ratios at $0$ and $\infty$, respectively.
This characterizes the approach to the problem of uniqueness and phase coexistence on trees: one must determine whether fixing the spins of vertices at depth $k$ in a rooted tree can influence the marginal of the root in the limit $k\to\infty$.
For each $d\ge 2$, the theorem below achieves this for the infinite $d$-ary tree through an understanding of the dynamical system $r_{n+1}=f_{d,\lambda}(r_n)$, which we think of as a one-dimensional system parametrized by $\lambda$.

\begin{theorem}\label[theorem]{thm:arytreedynamics}
    Let $f_{d,\lambda}:[0,\infty]\to[0,\infty]$ be the function with $f_{d,\lambda}(\infty)=0$ and $f_{d,\lambda}(r) = \lambda(1+r)^{-d}$. Then, for all integers $d\ge 2$, the following hold.
    \begin{enumerate}
        \item\label{itm:aryfixed} For all $\lambda>0$, $f_{d,\lambda}$ has exactly one fixed point $r^*$.
        \item\label{itm:aryunique} If $0<\lambda < \lambda_c(d+1)$, then $r^*$ is an attracting fixed point of $f_{d,\lambda}$ whose basin of attraction is $[0,\infty]$.
        \item\label{itm:arycoexist} If $\lambda > \lambda_c(d+1)$, then $r^*$ is a repelling fixed point and there exist $r^0 < r^* < r^1$ such that $\{r^0,r^1\}$ is an attracting periodic orbit of $f_{d,\lambda}$ whose basin of attraction is $[0,\infty]\setminus\{r^*\}$. More precisely, if $r\in [0,r^*)$, then $\lim_{n\to\infty}f_{d,\lambda}^{\circ 2n}(r)=r^0$, and if $r\in(r^*,\infty]$, then $\lim_{n\to\infty}f_{d,\lambda}^{\circ 2n}(r)=r^1$.
        \item\label{itm:arytreeorder} We have
        \[ \lim_{\lambda\downarrow \lambda_c(d+1)}|r^1-r^0| = 0. \]
    \end{enumerate}
\end{theorem}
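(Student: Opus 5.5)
The plan is to analyse the second iterate $g=f_{d,\lambda}\circ f_{d,\lambda}$, which is increasing on $[0,\infty]$ because $f=f_{d,\lambda}$ is strictly decreasing.

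\textbf{Item (i).} On $[0,\infty)$ the map $r\mapsto f(r)-r$ is continuous and strictly decreasing. It equals $\lambda>0$ at $r=0$ and tends to $-\infty$. So there is exactly one root $r^*$, and $\infty$ is not fixed since $f(\infty)=0$.

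\textbf{Derivative at $r^*$.} We have $f'(r^*)=-dr^*/(1+r^*)$, so $|f'(r^*)|<1$ if and only if $r^*<1/(d-1)$. Since $\lambda=r^*(1+r^*)^d$ is increasing in $r^*$, this becomes $\lambda<\frac1{d-1}\bigl(\frac d{d-1}\bigr)^d=\frac{d^d}{(d-1)^{d+1}}=\lambda_c(d+1)$. The same equivalence with reversed inequalities gives repulsion for $\lambda>\lambda_c(d+1)$.

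\textbf{Fixed points of $g$.} The fixed points of $g$ other than $r^*$ come in pairs $\{r,f(r)\}$, namely the 2-periodic orbits. Such a pair solves
\[ r=\lambda(1+s)^{-d},\qquad s=\lambda(1+r)^{-d}. \]
Dividing gives $r(1+r)^{-d}\cdot\ldots$; more cleanly, take logs and set $\phi(x)=\log x + d\log(1+\lambda(1+x)^{-d})$. The fixed points of $g$ are then the zeros of $\phi(x)-\log\lambda$. I will show that $g(x)-x$ has at most three zeros on $(0,\infty)$. The route is to count zeros of the derivative via a convexity or Descartes-type argument: substituting $y=1+x$, the equation $x(1+\lambda y^{-d})^d=\lambda$ leads to a function whose logarithmic derivative changes sign at most twice. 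This yields at most three fixed points of $g$.

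This zero count is the step I expect to be the main obstacle. The standard trick I would try first is to show that $x\mapsto x\,g'(x)/g(x)$ is unimodal, or to use the fact that $g$ has Schwarzian-type negativity.

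\textbf{Behaviour of $g$ and items (ii) and (iii).} We have $g(0)=f(\lambda)>0$ and $g(\infty)=f(0)=\lambda<\infty$, so $g(x)-x$ is positive near $0$ and negative near $\infty$. Its zeros therefore cross with an odd total count, and $g'(r^*)=f'(r^*)^2$.

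If $\lambda<\lambda_c(d+1)$, then $g'(r^*)<1$, so $g(x)-x$ crosses from $+$ to $-$ at $r^*$. With at most three zeros and an odd count, any others would have to come in a pair on one side of $r^*$. I intend to exclude such a pair by the same zero-count argument applied with tangency, or by monotonicity in $\lambda$. Then $r^*$ is the only fixed point of $g$. Monotonicity of $g$ sends $[0,r^*)$ upward to $r^*$ and $(r^*,\infty]$ downward to $r^*$, which gives item (ii).

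If $\lambda>\lambda_c(d+1)$, then $g'(r^*)>1$, so $g(x)-x$ is negative just left of $r^*$. It must therefore vanish at some $r^0<r^*$, and by the same reasoning at some $r^1>r^*$; by the count of three these are all the fixed points of $g$. The symmetry $f(r^0)=r^1$ holds because $f$ maps fixed points of $g$ to fixed points of $g$ and reverses order. Monotone convergence of $g^{\circ n}$ on $[0,r^0]$, $[r^0,r^*)$, $(r^*,r^1]$ and $[r^1,\infty]$ gives the basin statement. Attraction of the orbit follows from $g(x)-x$ crossing from $+$ to $-$ at $r^0$ and $r^1$, together with nondegeneracy ($g'\neq1$ there), which again comes from the zero count.

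\textbf{Item (iv).} As $\lambda\downarrow\lambda_c(d+1)$, take any subsequential limits of $r^0$ and $r^1$; they are bounded, since all of them lie in $[0,\lambda]$. By continuity these limits are fixed points of $g$ at $\lambda_c$. At $\lambda_c$ the argument above, with the tangency case $g'(r^*)=1$, shows that $r^*$ is the unique fixed point of $g$. Hence both limits equal $r^*(\lambda_c)$, and $|r^1-r^0|\to0$.
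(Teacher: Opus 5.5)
\textbf{There is a gap in the counting step.} Your overall plan is sound: reduce everything to the sign pattern of $g(x)-x$, use monotonicity of $g$, and finish (iv) by compactness. The problem is the step you flag as the main obstacle. It is not only unproved; as you state it, it is too weak for the uses you make of it.

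Since $f$ is strictly decreasing with unique fixed point $r^*$, any fixed point $r\neq r^*$ of $g$ has its partner $f(r)$ on the \emph{opposite} side of $r^*$. Extra fixed points do not come ``in a pair on one side''. So for $\lambda<\lambda_c(d+1)$ the configuration you must exclude is $r^0<r^*<r^1$, with $g(x)-x$ crossing downward at $r^*$ and merely touching zero at $r^0$ and $r^1$. That is exactly three distinct zeros, so a bound on the number of distinct fixed points of $g$ cannot rule it out.

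The same issue affects two other steps. Nondegeneracy $g'(r^0)\neq 1$ in (iii) is a tangency statement. So is uniqueness of the fixed point of $g$ at $\lambda_c(d+1)$, on which your item (iv) depends. What you need is a bound on critical points, not on zeros.

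\textbf{How to close it with your own $\phi$.} Since $e^{\phi(x)}=x(1+\lambda(1+x)^{-d})^d=\lambda x/g(x)$, you get
\[ \phi'(x)=\frac1x-\frac{d^2\lambda}{(1+x)\bigl((1+x)^d+\lambda\bigr)}=\frac{(1+x)^{d+1}+\lambda-(d^2-1)\lambda x}{x(1+x)\bigl((1+x)^d+\lambda\bigr)}. \]
The numerator is strictly convex in $x$, so $\phi'$ has at most two zeros on $(0,\infty)$. Moreover, at any fixed point of $g$ you have $\phi'(x)=(1-g'(x))/x$.

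Now apply Rolle's theorem between $r^0,r^*,r^1$ to get two zeros of $\phi'$. In each bad scenario a third zero appears:
\begin{enumerate}
\item in (ii), $r^0$ itself, because the sign pattern forces a tangency there;
\item in (iii), $r^0$, if $g'(r^0)=1$;
\item in (iv), $r^*$ at $\lambda_c(d+1)$, where $g'(r^*)=1$.
\end{enumerate}
Each contradicts the bound of two. The paper instead asserts that $g$ is $S$-shaped, so $g'-1$ has at most two zeros, which would serve the same purpose.

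\textbf{Comparison with the paper.} With this lemma in place, your argument is complete and differs from the paper's sketch. The paper obtains (iii) and (iv) from a pitchfork bifurcation of $g$ at $(r^*,\lambda_c)$ via Bacciotti's criterion. It then extends the local picture globally by appealing to the $S$-shape, without detail. You replace this with a global sign analysis plus a compactness argument: the fixed points are bounded, $g$ is continuous in $\lambda$, and the fixed point is unique at $\lambda_c$. Your route avoids verifying bifurcation hypotheses and the local-to-global step. The bifurcation viewpoint, in exchange, describes the local shape of the branches near $\lambda_c$ and parallels the saddle-node analysis the paper uses for biregular trees.
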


The proof of \Cref{thm:arytreedynamics} is standard and provided one supplies the details of some basic calculus, an exposition can be found in, e.g.\ the work of Galvin, Martinelli, Ramanan and Tetali~\cite[Sec.~2]{GMRT11}. See also Barvinok's monograph~\cite[Sec.~6.3]{Bar16a}. We give a brief sketch as it will be useful to refer to differences when we study biregular trees below.

\begin{proof}[Sketch proof of \Cref{thm:arytreedynamics}]
    For brevity, let $f=f_{d,\lambda}$ and write $g=f\circ f$ for the second iterate of the dynamical system. 
    
    \Cref{itm:aryfixed} follows from the facts that $f(0)>0$ and $f$ is monotone decreasing. If we solve for critical points of the system, we see that there is exactly one, located at $(r_c,\lambda_c)$, where $r_c=1/(d-1)$ and $\lambda_c=\lambda_c(d+1)$.

    For \Cref{itm:aryunique}, one has to verify that $r^*=r^*(\lambda)$ is a continuous and increasing function of $\lambda$.
    The critical value $\lambda_c$ is precisely the point at which $f'(r^*)=-1$. Equivalently, $(r^*(\lambda_c),\lambda_c)$ is a critical point of the dynamical system (and it is in fact the only critical point). We write $r_c=r^*(\lambda_c(d+1))$.
    
    For $\lambda < \lambda_c$, the fixed point $r^*$ is attracting, whereas for $\lambda > \lambda_c$, $r^*$ is repelling.
    To check the basin of attraction of $r^*$ for $\lambda<\lambda_c$, observe that $g(r)>r$ for $r<r^*$ and $g(r)<r$ for $r>r^*$.
    It follows that when $\lambda<\lambda_c$, for any $r<r^*$ the sequence $g^{\circ n}(r)$ is monotone increasing and approaches $r^*$ as $n\to \infty$. Similarly, for any $r>r^*$ the sequence $g^{\circ n}(r)$ is monotone decreasing and approaches $r^*$ as $n\to \infty$.
    
    To prove \Cref{itm:arycoexist,itm:arytreeorder}, we observe that $g$ is an $S$-shaped function (see~\cite[Sec.~2.2]{GMRT11}) in the sense that
    \begin{itemize}
        \item $g$ is continuous on $[0,\infty]$ and differentiable on $(0,\infty)$,
        \item $g$ is increasing on $(0,\infty)$ with $g(0)>0$ and $\sup_x g(x) < \infty$,
        \item there exists $\overline{x}\in(0,\infty)$ such that the derivative $g'$ is monotone increasing in the interval $(0, \overline{x})$ and monotone decreasing in the interval $(\overline{x}, \infty)$.
    \end{itemize}
    It is easy to show (see e.g.~\cite[Sec.~6.3]{Bar16a}) that such a function has at most three fixed points in $[0,\infty)$.

    The local behavior at $(r^*,\lambda_c)$ is a \emph{pitchfork bifurcation} of the second iterate of the dynamical system $r_{n+1}=g(r_n)$.
    See~\cite[Sec.~3.8]{Bac22} for the details of a pitchfork bifurcation and~\cite[Thm.~3.7 and Rem.~3.9]{Bac22} for sufficient conditions; the remark applies here because $r^*(\lambda)$ is a smooth moving fixed-point branch, and the required regularity and derivative conditions can be verified directly. The $S$-shape and uniqueness of the critical point extend this local bifurcation to the global fixed-point picture (cf.~\cite[Sec.~2.2]{GMRT11}).
    In summary, the type of pitchfork bifurcation we have here means the system has one fixed point for $\lambda \le \lambda_c$, which is attracting for $\lambda<\lambda_c$. As $\lambda$ passes above $\lambda_c$, the smooth continuation of this fixed point switches to repelling, and two new attracting fixed points appear.
    This proves \Cref{itm:arytreeorder}. The bifurcation diagram is plotted in the case $d=2$ in \Cref{fig:arybif}.
    \Cref{itm:arycoexist} now follows from an analysis of the basin of attraction of these fixed points of $g$ analogous to the above argument for $f$.
\end{proof}

\begin{figure}[htb]
    \centering
    \includesvg[width=0.5\linewidth]{figures/pitchfork.svg}
    \caption{The bifurcation diagram for the dynamical system $r_{n+1} = f_{2,\lambda}\circ f_{2,\lambda}(r_n)$. 
    Solid lines indicate attracting fixed points and dashed lines indicate repelling fixed points.
    Note that the $r$ axis has been rescaled by the map $r\mapsto \frac{r}{1+r} \in [0,1]$ for convenience.}
    \label{fig:arybif}
\end{figure}

In light of \Cref{lem:boundarymono} and the subsequent discussion, \Cref{thm:regtree} follows easily from \Cref{thm:arytreedynamics}.

\begin{proof}[Sketch proof of \Cref{thm:regtree}]
Let $d=\Delta-1$ and let $u$ be an arbitrary vertex of $\TT_\Delta$. 
Note that $\TT_\Delta-u$ is a disjoint union of $\Delta$ copies of the infinite $d$-ary tree. 
Then, for any hard-core measure $\nu$ on $\TT_\Delta$, the marginal $\nu(u)$ lies between the extreme values given by the ratio--marginal coordinate transform $r\mapsto r/(1+r)$ applied to $f_{\Delta,\lambda}(r^1)$ and $f_{\Delta,\lambda}(r^0)$, where $r^0$ and $r^1$ from \Cref{thm:arytreedynamics} are obtained as limits of the root occupation ratios on the full $d$-ary tree of depth $2k$ under all-vacant and all-occupied boundary conditions, respectively.

For $\lambda<\lambda_c(\Delta)$, these limits must be equal and are the attracting fixed point $r^*$ given by \Cref{thm:arytreedynamics}\labelcref{itm:aryfixed,itm:aryunique}, whereas for $\lambda>\lambda_c(\Delta)$, they are precisely the $r^0$ and $r^1$ of \Cref{thm:arytreedynamics}\labelcref{itm:arycoexist}.
The phase transition at $\lambda_c(\Delta)$ is second-order because of \Cref{thm:arytreedynamics}\labelcref{itm:arytreeorder}, the monotonicity of \Cref{lem:boundarymono}, and the continuity of $f_{\Delta,\lambda}$.
\end{proof}

\section{Biregular trees}

For positive integers $\delta,\Delta$, let $\TT_{\delta,\Delta}$ be the \emph{infinite $(\delta,\Delta)$-biregular tree}. That is, $\TT_{\delta,\Delta}$ is the unique infinite tree in which each vertex has degree in the set $\{\delta,\Delta\}$, and all edges lie between a vertex of degree $\delta$ and a vertex of degree $\Delta$. In this section, we show that $\TT_{\delta,\Delta}$ undergoes a first-order phase transition when $\delta \ne \Delta$. Throughout this section, we let $a = \delta - 1$ and $b = \Delta - 1$.

\begin{theorem}\label[theorem]{thm:abtreedynamics}
    Fix $\lambda \ge 0$ and integers $2\le a<b$. Let $g:[0,\infty]\to[0,\infty]$ be the function $g = f_{a,\lambda}\circ f_{b,\lambda}$. There is a value $\overline{\lambda}$, defined in \Cref{lem:bifpoint}, such that the following statements hold.
    \begin{enumerate}
        \item\label{itm:abarytreeunique} If $\lambda < \overline{\lambda}$, then $g$ has exactly one fixed point $r^1$, which is attracting and has basin of attraction $[0,\infty]$.
        \item\label{itm:abarytreecrit} If $\lambda = \overline{\lambda}$, then $g$ has two fixed points $r^0 < r^1$ such that $g'(r^0)=1$ and $r^1$ is attracting.
        \item\label{itm:abarytreecoexist} If $\lambda > \overline{\lambda}$, then $g$ has three fixed points $r^0 < r^* < r^1$ such that $r^*$ is repelling and $r^0$ and $r^1$ are attracting.
        \item\label{itm:abarytreeorder} We have
        \[ \lim_{\lambda\downarrow\overline{\lambda}}|r^1-r^0| > 0. \]
        \item\label{itm:abflipped} The behavior of $h = f_{b,\lambda}\circ f_{a,\lambda}$ is similar and can be described in terms of the parameter above as follows:
        \begin{itemize}
            \item If $\lambda < \overline{\lambda}$, then $h$ has exactly one fixed point $f_{b,\lambda}(r^1)$, which is attracting.
            \item If $\lambda = \overline{\lambda}$, then $h$ has two fixed points $f_{b,\lambda}(r^1) < f_{b,\lambda}(r^0)$ such that $h'(f_{b,\lambda}(r^0))=1$ and $f_{b,\lambda}(r^1)$ is attracting.
            \item If $\lambda > \overline{\lambda}$, then $h$ has three fixed points $f_{b,\lambda}(r^1) < f_{b,\lambda}(r^*) < f_{b,\lambda}(r^0)$ such that $f_{b,\lambda}(r^*)$ is repelling and $f_{b,\lambda}(r^0)$ and $f_{b,\lambda}(r^1)$ are attracting.
        \end{itemize}
    \end{enumerate}
\end{theorem}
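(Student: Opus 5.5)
The plan is to study fixed points of $g$ through the equivalent pair of equations for a $2$-periodic orbit of the alternating ratio recursion. Write $s=f_{b,\lambda}(r)$, so that $r$ is a fixed point of $g$ exactly when
\[ s(1+r)^b=\lambda \quad\text{and}\quad r(1+s)^a=\lambda. \]
Since $f_{d,\lambda}'(x)=-d\,f_{d,\lambda}(x)/(1+x)$, the chain rule gives, at such a point,
\[ g'(r)=\frac{ab\,rs}{(1+r)(1+s)}. \]
The map $g$ is continuous and increasing on $[0,\infty]$, with $g(0)>0$ and $g(\infty)=f_{a,\lambda}(0)=\lambda<\infty$. Hence $g$ has at least one fixed point, and $g(r)-r$ changes sign from positive to negative an odd number of times, counted with multiplicity.

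\textbf{Step 1: at most three fixed points.} First I will show that $g$ is $S$-shaped in the sense recalled in the sketch proof of \Cref{thm:arytreedynamics}. I would verify directly that $g'$ is unimodal, for instance by showing that $\log g'$ is strictly concave in a suitable coordinate, or by a Schwarzian-derivative computation. By the argument cited from~\cite[Sec.~6.3]{Bar16a}, $g$ then has at most three fixed points.

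\textbf{Step 2: locate the tangency (\Cref{lem:bifpoint}).} Eliminating $\lambda$ gives the fixed-point curve
\[ \mathcal{C}=\{(r,s):s(1+r)^b=r(1+s)^a\}, \]
with $\lambda=s(1+r)^b$ along it. A fixed point with multiplicity (a saddle node) is a point of $\mathcal{C}$ with $ab\,rs=(1+r)(1+s)$.
\begin{itemize}
\item Substitute $u=1+r$, $v=1+s$. I expect the system $(v-1)u^b=(u-1)v^a$, $ab(u-1)(v-1)=uv$ to have exactly one solution off the diagonal, which is where $a<b$ is used.
\item When $a=b$, the diagonal $r=s$ is a component of $\mathcal{C}$, and the tangency there is the pitchfork.
\item When $a<b$, the fixed point of $g$ does not lie on the diagonal, so the pitchfork is broken into a saddle-node.
\end{itemize}
This unique tangency point defines $\overline{\lambda}$ and $r^0(\overline{\lambda})$. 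I would then check that the tangency lies below the other fixed point. To do so, trace $\lambda$ along $\mathcal{C}$ and show it is monotone on each side of the tangency point: $\lambda$ increasing along the branch through large $r$, and $\lambda$ having a single minimum at the tangency on the other branch.

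This gives the fixed-point counts in items (i)--(iii). The transversal fixed points have $g'\ne1$, and the sign pattern of $g(r)-r$ then forces the outer fixed points to have $g'<1$ (attracting) and the middle one to have $g'>1$ (repelling). For item (i), since the unique fixed point $r^1$ has $g>\mathrm{id}$ below it and $g<\mathrm{id}$ above it, iterates of the increasing map $g$ converge monotonically to $r^1$ from any starting point in $[0,\infty]$.

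\textbf{Step 3: items (iv) and (v).} For item (iv), $g'(r^1)<1$ at $\overline{\lambda}$, so the implicit function theorem continues $r^1(\lambda)$ continuously through $\overline{\lambda}$. The branch $r^0(\lambda)$ emerging from the saddle node tends to $r^0(\overline{\lambda})$ as $\lambda\downarrow\overline{\lambda}$. Since these are distinct fixed points at $\overline{\lambda}$, the limit equals $r^1(\overline{\lambda})-r^0(\overline{\lambda})>0$.

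For item (v), use the conjugacy $h\circ f_{b,\lambda}=f_{b,\lambda}\circ g$. The map $f_{b,\lambda}$ is a decreasing bijection from the fixed points of $g$ onto those of $h$, so it reverses their order. By the chain rule, $h'(f_{b,\lambda}(r))=f_{a,\lambda}'(r)\,f_{b,\lambda}'(s)=g'(r)$ at each fixed point, which transfers the attracting, repelling and tangency properties.

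\textbf{Main obstacle.} I expect the hardest part to be Step 2: proving that the tangency system has a unique solution, and identifying which branch it lies on. My first attempt would be to parametrize $\mathcal{C}$ by $t=\log(1+r)/\log(1+s)$, or by $r$ alone, and reduce the question to monotonicity of a one-variable function.
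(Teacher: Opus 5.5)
Your stability argument via the sign pattern of $g(r)-r$, the monotone convergence for the basin in (i), the continuation in (iv), and the conjugacy in (v) all match the paper's proof. Two small slips. First, at a fixed point $h'(f_{b,\lambda}(r))=f_{b,\lambda}'(r)\,f_{a,\lambda}'(s)$: you swapped the subscripts, though the value is still $g'(r)$. Second, $\lambda=0$, where $g\equiv 0$ and $g(0)>0$ fails, needs its own line.

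\textbf{The gap.} It is in Step~2, which carries essentially all the content of (i)--(iv). You only \emph{expect} that the tangency system has a unique positive solution. The remark that the diagonal stops being a component of $\mathcal{C}$ when $a<b$ is a heuristic. It neither excludes several tangencies nor says on which side of $\overline{\lambda}$ the new pair of fixed points is born. Without these two facts you have no fixed-point counts, no transversality for your stability argument, and no simplicity of $r^1(\overline{\lambda})$ for the continuation in (iv).

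\textbf{What uniqueness requires.} It needs a real computation, which the paper does in \Cref{lem:bifpoint}. Eliminating $\lambda$ via $abr\lambda=(1+r)^{b+1}+\lambda(1+r)$ leaves $Q(r)=(ab)^a$ with $Q(r)=(1+r)^{b+1}((ab-1)r-1)^{a-1}r^{-a-1}$ on $r>1/(ab-1)$. $Q$ rises from $0$ to a local maximum at $1/(b-1)$, falls to a local minimum at $(a+1)/(ab-1)$, then rises to $\infty$. So one must show both critical values exceed $(ab)^a$. The first follows from monotonicity of $x^x/(x-1)^{x-1}$. The second is the less obvious inequality $K(a,b)>0$, obtained from $K(b,b)=0$ and $\partial K/\partial a<0$.

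Your coordinates do not bypass this. With $x=r/(1+r)$ and $y=s/(1+s)$, $\mathcal{C}$ becomes $x(1-x)^{b-1}=y(1-y)^{a-1}$ and tangency becomes $xy=1/(ab)$. Intersecting them yields the same two critical points ($r=1/(b-1)$ and $r=(a+1)/(ab-1)$) and the same two inequalities.

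\textbf{Your tracing idea, once uniqueness is known.} It is a nicer route to the orientation than the paper's local computation $\lambda''(r_c)>0$ in \Cref{lem:saddlenode} plus a continuation argument, and it even makes Step~1 unnecessary. But you must carry it out:
\begin{enumerate}
\item Since $\max_r r(1+r)^{-b}<\max_s s(1+s)^{-a}$, the positive part of $\mathcal{C}$ consists of two smooth graphs over all $r>0$, separated by $s=1/(a-1)$. So ``the branch through large $r$'' is ambiguous; it should be the one with $s<1/(a-1)$.
\item Along either graph, $d\lambda/dr$ vanishes exactly where $abrs=(1+r)(1+s)$.
\item On the graph with $s>1/(a-1)$, $\lambda\to\infty$ at both ends, so the unique tangency must be its minimum.
\item On the other graph, $\lambda$ then increases monotonically from $0$ to $\infty$. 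Since $s$ is smaller there, this graph carries the largest fixed point $r^1$.
\end{enumerate}
As written, your proposal states these conclusions without proving them.
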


The fixed-point jump in \Cref{itm:abarytreeorder} is the source of the first-order transition proved in \Cref{thm:abfirstorder}. \Cref{fig:gversusrplot} illustrates the three cases in \Crefrange{itm:abarytreeunique}{itm:abarytreecoexist} when $a=2$ and $b=3$. The intersections of the curve with the line $g(r)=r$ give the fixed points in each case.

\begin{figure}[htb]
    \centering
    \subfloat[$\lambda < \overline{\lambda}$]{\includesvg[width=0.3\linewidth]{figures/subcrit.svg}}\qquad
    \subfloat[$\lambda = \overline{\lambda}$]{\includesvg[width=0.3\linewidth]{figures/crit.svg}}\qquad
    \subfloat[$\lambda > \overline{\lambda}$]{\includesvg[width=0.3\linewidth]{figures/supcrit.svg}}
    \caption{Plots of $g = f_{a,\lambda}\circ f_{b,\lambda}$ (solid blue line) and the line $g(r)=r$ (dashed orange line), where $a = 2$, $b = 3$, and $\overline{\lambda}\approx 11.002$. Note that both axes have been rescaled by the map $r\mapsto \frac{r}{1+r} \in [0,1]$ for convenience.}
    \label{fig:gversusrplot}
\end{figure}

\begin{corollary}\label[corollary]{thm:abfirstorder}
    The value $\overline{\lambda}$ in \Cref{thm:abtreedynamics} is a first-order phase transition for the infinite $(a+1,b+1)$-biregular tree $\TT_{a+1,b+1}$.
\end{corollary}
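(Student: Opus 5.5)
We deduce \Cref{thm:abfirstorder} from \Cref{thm:abtreedynamics}, in the same way \Cref{thm:regtree} follows from \Cref{thm:arytreedynamics}. We must show two things: that $\overline{\lambda}$ is a phase transition in the sense of \Cref{def:phasetrans}, and that the one-sided limit $\ell_{G,u}(\overline{\lambda})$ exists and is positive for some vertex $u$.

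\emph{Reduction to extremal measures.} Fix a vertex $u$ of $\TT_{a+1,b+1}$ of degree $\delta=a+1$. Removing $u$ leaves $a+1$ copies of a rooted tree in which the roots have degree $\Delta$. In such a tree, vertices at even depth have $b$ children and vertices at odd depth have $a$ children. Truncate at even depth $2k$ and impose all-vacant or all-occupied leaves, i.e.\ initial ratio $0$ or $\infty$. By \Cref{prop:rationbrs}, the ratio at each child of $u$ is then obtained by iterating $h=f_{b,\lambda}\circ f_{a,\lambda}$ from that initial value (or $g$, depending on parity conventions, which we fix carefully). By \Cref{lem:boundarymono} and the discussion after it, every hard-core measure $\mu$ satisfies $\mu^0\prec\mu\prec\mu^1$. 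Since $u$ lies in the class $A$ of the root, this gives
\[
\mu^0(u)\le\mu(u)\le\mu^1(u),
\]
so $D_{G,u}(\lambda)=\mu^1(u)-\mu^0(u)$. Moreover, uniqueness holds iff $\mu^0=\mu^1$. The limits of the iterates are governed by basins of attraction. Starting at $\infty$, the iterates decrease monotonically to the largest fixed point. Starting at $0$, they increase monotonically to the smallest fixed point. This uses that $g,h$ are increasing, so the iterates are monotone sequences.

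\emph{Uniqueness and coexistence.} For $\lambda<\overline{\lambda}$, \Cref{thm:abtreedynamics}\labelcref{itm:abarytreeunique} and the $h$-part of \labelcref{itm:abflipped} give a single fixed point. Both boundary conditions therefore produce the same limit at every vertex of a given depth. Since a hard-core measure on a tree is determined by its ratios via the recursion, $\mu^0=\mu^1$ and we have uniqueness. For $\lambda>\overline{\lambda}$, \labelcref{itm:abarytreecoexist} gives distinct smallest and largest fixed points. These produce distinct marginals for $\mu^0$ and $\mu^1$, so we have phase coexistence. Thus $\overline{\lambda}$ is a phase transition. One subtlety needs checking: at a vertex of degree $\delta$ we apply $f_{a+1,\lambda}$ (not $f_{a,\lambda}$) to the children's ratio. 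This map is strictly decreasing, so distinct children's ratios give distinct marginals at $u$.

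\emph{First order.} For $\lambda>\overline{\lambda}$,
\[
D_{G,u}(\lambda)=\phi\big(f_{a+1,\lambda}(s^0(\lambda))\big)-\phi\big(f_{a+1,\lambda}(s^1(\lambda))\big),
\]
where $\phi(r)=r/(1+r)$ and $s^0<s^1$ are the extreme fixed points of $h$. By \labelcref{itm:abflipped}, these are $f_{b,\lambda}(r^1)$ and $f_{b,\lambda}(r^0)$. By \labelcref{itm:abarytreeorder}, $\lim_{\lambda\downarrow\overline{\lambda}}|r^1-r^0|>0$. We need this limit to exist, not just have positive lower behavior. To get it, I would argue that $r^0(\lambda)$ and $r^1(\lambda)$ are monotone in $\lambda$ on $(\overline{\lambda},\overline{\lambda}+\epsilon)$. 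This follows because $g$ is monotone in $\lambda$ and these are the extreme fixed points. Hence the one-sided limits exist, and in fact converge to the two fixed points at $\overline{\lambda}$ from \labelcref{itm:abarytreecrit}. The maps $f_{b,\lambda}$ and $f_{a+1,\lambda}$ are strictly monotone and jointly continuous in $(r,\lambda)$. Therefore $D_{G,u}(\lambda)$ converges to a strictly positive limit, and so $\ell_{G,u}(\overline{\lambda})>0$.

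The main obstacle I expect is this last step. One must establish the existence of the one-sided limit $\ell_{G,u}$ required by \Cref{def:orders} for every vertex, not just $u$. Vertices of degree $\Delta$ are handled symmetrically by rooting there and using $g$. One must also justify continuity of the extreme fixed points from above. For this I would use monotonicity of the minimal and maximal fixed points in $\lambda$, together with closedness of the fixed-point set.
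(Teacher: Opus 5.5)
Your reduction matches the paper's: you sandwich every measure between $\mu^0$ and $\mu^1$ via \Cref{lem:boundarymono}, identify the extremal root ratios with the extreme fixed points of $h$ or $g$, and push them through $f_{a+1,\lambda}$ or $f_{b+1,\lambda}$. The gap is at the step you flagged, and it rests on a false claim. You justify existence of the one-sided limits of $r^0(\lambda)$ and $r^1(\lambda)$ by saying that $g$ is monotone in $\lambda$. It is not:
\[
\frac{\partial g}{\partial\lambda}=\frac{(1+r)^{ab}\bigl((1+r)^b-(a-1)\lambda\bigr)}{\bigl(\lambda+(1+r)^b\bigr)^{a+1}},
\]
which has the sign of $(1+r)^b-(a-1)\lambda$. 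This is negative at $(r_c,\overline{\lambda})$, which is exactly the sign computation in \Cref{lem:saddlenode}, and positive for large $r$. So for every $\lambda$ near $\overline{\lambda}$, $g$ decreases in $\lambda$ near $r_c$ and increases in $\lambda$ for large $r$. A comparison argument of this kind cannot work even in principle. If $g$ were increasing in $\lambda$, the smallest fixed point $\lim_n g^{\circ n}(0)$ would be nondecreasing in $\lambda$. In fact it tends to $r^1(\overline{\lambda})$ as $\lambda\uparrow\overline{\lambda}$, equals $r_c<r^1(\overline{\lambda})$ at $\overline{\lambda}$, and lies below $r_c$ just above $\overline{\lambda}$. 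Item (iv) of \Cref{thm:abtreedynamics} alone does not rescue you: it controls only $|r^1-r^0|$, while $D_{G,u}$ depends nonlinearly on $r^0$, $r^1$ and $\lambda$. You genuinely need the individual limits.

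What replaces your monotonicity step is the pair of branch limits the paper extracts from the proof of \Cref{thm:abtreedynamics}, and these need no monotonicity in $\lambda$. Write $H(r,\lambda)=g(r,\lambda)-r$.
\begin{itemize}
\item \emph{Lower branch.} Since $\partial_\lambda H(r_c,\overline{\lambda})<0$, for $\lambda$ slightly above $\overline{\lambda}$ we have $H(0,\lambda)>0>H(r_c,\lambda)$, so $r^0(\lambda)<r_c$. All fixed points lie in $[0,\lambda]$ and the zero set of $H$ is closed. Hence every limit point of $r^0(\lambda)$ as $\lambda\downarrow\overline{\lambda}$ is a fixed point of $g(\cdot,\overline{\lambda})$ in $[0,r_c]$, and so equals $r_c$.
\item \emph{Upper branch.} By the same compactness, for $\lambda$ near $\overline{\lambda}$ every fixed point lies near $\{r_c,r^1(\overline{\lambda})\}$. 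Since $r^1(\overline{\lambda})$ is a simple zero with $\partial_r H<0$, the implicit function theorem gives exactly one fixed point near it. That point is therefore the largest fixed point, and $r^1(\lambda)\to r^1(\overline{\lambda})$.
\end{itemize}
With these individual limits, joint continuity and strict monotonicity of $f_{b,\lambda}$ and $f_{a+1,\lambda}$ give $\ell_{G,u}(\overline{\lambda})>0$ as you intended. The same argument with $g$ and $f_{b+1,\lambda}$ handles vertices of degree $b+1$.
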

Unlike the infinite $d$-ary tree, the associated dynamical system is verified below to undergo a \emph{saddle-node} bifurcation~\cite[Thm.~3.5]{Bac22}. The sign calculation determines that the bifurcating pair lies on the side $\lambda>\overline{\lambda}$. We also use the following facts about $g$, which follow by direct differentiation. The key idea is that, by Rolle's theorem, a solution to $g'(r) = 1$ must lie between any pair of distinct fixed points of $g$.

\begin{enumerate}
    \item For $\lambda>0$, the function $g$ is strictly increasing and is strictly concave when $0<\lambda\le (b+1)/(ab-1)$. When $\lambda>(b+1)/(ab-1)$, it is $S$-shaped with unique inflection point
    \[r_{\mathrm{inf}}=\left(\frac{(ab-1)\lambda}{b+1}\right)^{1/b}-1.\]
    In all cases, $g(0) = \frac{\lambda}{(1+\lambda)^a}$ and $\sup_r g(r) = \lambda$.
    \item The function $g$ must have at least one fixed point and at most three fixed points for all $\lambda \in (0, \infty)$.
\end{enumerate}

Before we prove \Cref{thm:abtreedynamics,thm:abfirstorder}, we establish some properties of $g$ that will be of use.

\begin{lemma}\label[lemma]{lem:bifpoint} The map $g$ has a unique critical fixed point $(r_c, \lambda_c)$. It satisfies
    \[\frac{1}{ab-1}<r_c<\frac{1}{b-1}\]
    and is determined by the following equations:
     \begin{enumerate}
        \item $(1+r_c)^{b+1}(abr_c - (1+r_c))^{a-1} = (ab)^ar_c^{a+1}$, and
        \item $\lambda_c = \frac{(1+r_c)^{b+1}}{abr_c - (1+r_c)}$.
    \end{enumerate}
\end{lemma}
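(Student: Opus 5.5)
The plan is to eliminate the intermediate ratio and reduce everything to a one-variable equation. Write $s=f_{b,\lambda}(r)=\lambda(1+r)^{-b}$, so that a fixed point of $g$ is a pair $(r,s)$ with $s(1+r)^b=\lambda$ and $r(1+s)^a=\lambda$. Since $f_{d,\lambda}'(x)=-d\,f_{d,\lambda}(x)/(1+x)$, the chain rule gives
\[
g'(r)=f_{a,\lambda}'(s)\,f_{b,\lambda}'(r)=\frac{a r}{1+s}\cdot\frac{b s}{1+r}.
\]
So a critical fixed point ($g(r)=r$, $g'(r)=1$) is characterized by $ab\,rs=(1+r)(1+s)$.

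\textbf{Deriving the two equations.} Solving the criticality condition for $s$ gives $s=(1+r)/(abr-(1+r))$. Positivity of $s$ forces $abr>1+r$, that is, $r>1/(ab-1)$, which is the lower bound. Substituting into $\lambda=s(1+r)^b$ gives equation (ii). Next, $1+s=abr/(abr-1-r)$, and substituting into $\lambda=r(1+s)^a$ and equating with (ii) gives
\[
\frac{(ab)^a r^{a+1}}{(abr-1-r)^a}=\frac{(1+r)^{b+1}}{abr-1-r},
\]
which is equation (i). Conversely, any $r>1/(ab-1)$ satisfying (i), with $\lambda$ defined by (ii), yields a critical fixed point. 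Hence the lemma reduces to showing that
\[
F(r)=(b+1)\log(1+r)+(a-1)\log(abr-1-r)-(a+1)\log r-a\log(ab)
\]
has exactly one zero on $(1/(ab-1),\infty)$, and that this zero lies below $1/(b-1)$.

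\textbf{Existence and the upper bound.} As $r\downarrow 1/(ab-1)$ we have $F\to-\infty$, because $a\ge 2$. As $r\to\infty$, $F\sim(b-1)\log r\to+\infty$. At $r=1/(b-1)$ we have $1+r=b/(b-1)$ and $abr-1-r=b(a-1)/(b-1)$, and the logarithms collapse to
\[
F\bigl(1/(b-1)\bigr)=\varphi(b)-\varphi(a),\qquad \varphi(x)=x\log x-(x-1)\log(x-1).
\]
Since $\varphi'(x)=\log\frac{x}{x-1}>0$ and $b>a$, this is positive. So $F$ has a zero in $(1/(ab-1),1/(b-1))$. Once uniqueness is known, this gives both bounds on $r_c$.

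\textbf{Uniqueness (main obstacle).} Clearing denominators in $F'$ gives $r(1+r)(abr-1-r)F'(r)=Q(r)$, where
\[
Q(r)=(b+1)r(abr-1-r)+(a-1)(ab-1)r(1+r)-(a+1)(1+r)(abr-1-r).
\]
This is a quadratic with leading coefficient $(ab-1)(b-1)>0$. At $r_0=1/(ab-1)$ we get $Q(r_0)=(a-1)(ab-1)r_0(1+r_0)>0$.

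I would first compute $Q'(r_0)$ and hope it is nonnegative. In that case the vertex of the parabola lies at or left of $r_0$, so $Q>0$ and $F$ is strictly increasing on the interval, which gives uniqueness immediately. If this sign check fails for some $(a,b)$, I would fall back on a geometric argument. Every critical fixed point is a tangency of the graph of $g$ with the diagonal. The paper notes that $g$ is strictly increasing, is $S$-shaped (or concave), and has at most three fixed points. From this I would argue, via Rolle's theorem and the monotone dependence of $g$ on $\lambda$, that only one value of $\lambda$ admits such a tangency. This would rule out a second zero of $F$ on the relevant range.
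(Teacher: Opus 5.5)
Your derivation of (i) and (ii) is correct, and so is the lower bound $r>1/(ab-1)$ from $s>0$. The endpoint limits of $F$ and the evaluation $F(1/(b-1))=\varphi(b)-\varphi(a)>0$ are also right. Your $F$ is $\log Q-a\log(ab)$ in the paper's notation, and that evaluation is exactly the paper's check at the first extremum.

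The gap is uniqueness. Your first hope fails for every admissible $(a,b)$, because your quadratic factors as
\[
(ab-1)(b-1)r^2-(2ab+b-a-2)r+(a+1)=(ab-1)(b-1)\Bigl(r-\frac{1}{b-1}\Bigr)\Bigl(r-\frac{a+1}{ab-1}\Bigr).
\]
So its derivative at $r_0$ is $a+b-2ab<0$, and both roots exceed $1/(ab-1)$, with $\frac{1}{b-1}<\frac{a+1}{ab-1}$ since $a<b$. Hence $F$ increases up to $1/(b-1)$, decreases on $\bigl(\frac{1}{b-1},\frac{a+1}{ab-1}\bigr)$, and increases afterwards.

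Your computation therefore gives exactly one zero in $(1/(ab-1),1/(b-1))$. To exclude further zeros you must also show that the local minimum is positive:
\[
F\Bigl(\frac{a+1}{ab-1}\Bigr)=\log\frac{a^b(b+1)^{b+1}(ab-1)^{a-b}}{b^a(a+1)^{a+1}}>0 .
\]
This is the key inequality $K(a,b)>0$ in the paper's proof. The paper obtains it by noting that the expression vanishes at $a=b$, and that its $a$-derivative is below $\frac{a-b}{a(ab-1)}<0$ for $1<a<b$, since $\log\frac{ab-1}{b(a+1)}<0$.

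Your fallback does not supply this inequality. First, $g$ is not monotone in $\lambda$: $\partial g/\partial\lambda$ has the sign of $(1+r)^b-(a-1)\lambda$. This is positive for small $\lambda$ and negative at the critical point itself; for example, $g(0)=\lambda/(1+\lambda)^a$ increases for $\lambda<1/(a-1)$ and decreases afterwards. Second, the bound of at most three fixed points holds separately for each $\lambda$, so it says nothing against tangencies at two different values of $\lambda$. An $S$-shaped family can have two saddle-node parameter values, and here that would correspond exactly to $F$ having three zeros. Rolle's theorem and the $S$-shape alone cannot exclude this.

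The required inequality is also delicate: it degenerates to equality at $a=b$. There the two critical points of $F$ merge at $1/(b-1)$ and one recovers the pitchfork of the regular tree. So a quantitative estimate of $F$ at $r=(a+1)/(ab-1)$ is unavoidable, and your proposal is missing it.
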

\begin{proof}
    A critical fixed point must satisfy the following conditions:
    \begin{axioms}
  \axitem[(FP)]\label{eq:fixedptbifpt} $g(r_c, \lambda_c) = r_c \iff (1+r_c)^{ab}\lambda_c = r_c(\lambda_c + (1+r_c)^b)^a$, and
  \axitem[(CR)]\label{eq:critbifpt} using \Cref{eq:fixedptbifpt}, $\frac{\partial g}{\partial r}|_{(r, \lambda) = (r_c, \lambda_c)} = 1 \iff abr_c\lambda_c = (1+r_c)^{b+1} + \lambda_c(1+r_c)$.
\end{axioms}
\Cref{eq:fixedptbifpt} ensures that $r_c$ is a fixed point, and \Cref{eq:critbifpt} ensures that it is critical. Rearranging \Cref{eq:critbifpt} gives
\[\lambda_c = \frac{(1+r_c)^{b+1}}{abr_c - (1+r_c)}.\]
Substituting this expression into \Cref{eq:fixedptbifpt} gives
\[(1+r_c)^{b+1}(abr_c - (1+r_c))^{a-1} = (ab)^ar_c^{a+1}.\]
Let $c=ab-1$ and define
\[Q(r)=\frac{(1+r)^{b+1}(cr-1)^{a-1}}{r^{a+1}}.\]
The equation above is $Q(r)=(ab)^a$, and a logarithmic derivative gives
\[\frac{Q'(r)}{Q(r)}=
\frac{c(b-1)\left(r-\frac{1}{b-1}\right)\left(r-\frac{a+1}{c}\right)}
{r(1+r)(cr-1)}.\]
Since $a<b$, we have
\[\frac1c<\frac1{b-1}<\frac{a+1}{c}.\]
The two extrema of $Q$ both lie above $(ab)^a$. Indeed, at the first one the ratio is
\[\frac{Q(1/(b-1))}{(ab)^a}
=\frac{b^b/(b-1)^{b-1}}{a^a/(a-1)^{a-1}}>1,\]
since $x^x/(x-1)^{x-1}$ is strictly increasing for $x>1$. At the second one, the logarithm of this ratio is
\[K(a,b)=\log\left(\frac{a^b(b+1)^{b+1}(ab-1)^{a-b}}{b^a(a+1)^{a+1}}\right).\]
Here $K(b,b)=0$, while the inequality $\log x\le x-1$ gives
\[\frac{\partial K}{\partial a}<\frac{a-b}{a(ab-1)}<0\]
for $1<a<b$, and hence $K(a,b)>0$. Finally, $Q(r)\to0$ as $r\downarrow1/c$ and $Q(r)\to\infty$ as $r\to\infty$. It follows that there is a unique solution, and that it lies in the interval $(1/(ab-1),1/(b-1))$. The formula for $\lambda_c$ then gives a unique positive value of $\lambda_c$.
\end{proof}
Equivalently, $r_c$ is the unique admissible root of a polynomial equation whose degree depends only on $a$ and $b$, and $\lambda_c$ is then given by \Cref{lem:bifpoint}.
\begin{lemma}\label[lemma]{lem:saddlenode}
    Let $(r_c, \lambda_c)$ be as defined in \Cref{lem:bifpoint}. Then $g$ undergoes a saddle-node bifurcation at $(r_c, \lambda_c)$.
\end{lemma}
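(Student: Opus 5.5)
We verify the hypotheses of the saddle-node bifurcation theorem~\cite[Thm.~3.5]{Bac22} for the one-parameter family $G(r,\lambda)=g(r)-r$ at $(r_c,\lambda_c)$. The theorem requires four conditions:
\begin{enumerate}
    \item a fixed point, $G(r_c,\lambda_c)=0$;
    \item criticality, $\partial_r g(r_c,\lambda_c)=1$;
    \item transversality, $\partial_\lambda g(r_c,\lambda_c)\neq 0$;
    \item nondegeneracy, $\partial_r^2 g(r_c,\lambda_c)\neq 0$.
\end{enumerate}
The first two are exactly \Cref{eq:fixedptbifpt} and \Cref{eq:critbifpt} from \Cref{lem:bifpoint}. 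Smoothness of $g$ in $(r,\lambda)$ on $(0,\infty)\times(0,\infty)$ is clear, since $g$ is a composition of rational maps with nonvanishing denominators. The work is therefore in the last two conditions, together with the signs that place the bifurcating pair on the side $\lambda>\lambda_c$.

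\textbf{Transversality.} Write $s=f_{b,\lambda}(r)=\lambda(1+r)^{-b}$, so that $g=\lambda(1+s)^{-a}$. Then
\[
\partial_\lambda g=\frac{g}{\lambda}-\frac{a g}{1+s}\cdot\frac{s}{\lambda}=\frac{g}{\lambda}\cdot\frac{1+s-as}{1+s}.
\]
Its sign is that of $1-(a-1)s$. At the critical point, \Cref{eq:critbifpt} can be rewritten using $s_c=\lambda_c(1+r_c)^{-b}$ as
\[
ab\,r_c s_c=(1+r_c)(1+s_c),
\]
because $\partial_r g=ab\,g\,s/((1+s)(1+r))$ and $g=r$. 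I would solve this for $s_c$ and combine it with the bound $r_c<1/(b-1)$ from \Cref{lem:bifpoint} to determine the sign of $1-(a-1)s_c$. I expect this to show $\partial_\lambda g<0$ at the critical point. That would match the figures: increasing $\lambda$ lowers $g(0)$ relative to the diagonal near $r_c$, creating the new pair of fixed points.

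\textbf{Nondegeneracy.} I would differentiate
\[
\log g'(r)=\log(ab)+\log g+\log s-\log(1+s)-\log(1+r).
\]
Using $s'=-bs/(1+r)$ and $g'/g=ab\,s/((1+s)(1+r))$, evaluating at $g=r$ and $g'=1$ gives
\[
\frac{g''}{g'}=\frac{1}{r}-\frac{b}{(1+r)(1+s)}-\frac{1}{1+r}.
\]
Substituting $s_c$ from the relation above should reduce this to a rational function of $r_c$ alone, with sign governed by something like $(b-1)r_c-1$. By \Cref{lem:bifpoint} that quantity is strictly negative, so $g''(r_c)>0$ and the critical fixed point sits to the left of the inflection point $r_{\mathrm{inf}}$, in the convex part of the $S$-shape. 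This is consistent with the figure, where the new fixed points appear near $r^0$.

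\textbf{Conclusion.} Given $g''>0$ and $\partial_\lambda g<0$, the normal form
\[
g(r)-r\approx \tfrac12 g''(r_c)(r-r_c)^2+\partial_\lambda g\,(\lambda-\lambda_c)
\]
has two roots near $r_c$ for $\lambda>\lambda_c$ and none for $\lambda<\lambda_c$. This is a saddle-node bifurcation with the pair on the side $\lambda>\lambda_c$.

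\textbf{Main obstacle.} The delicate step is the sign bookkeeping, particularly transversality. The quantity $1-(a-1)s_c$ may not have an obvious sign from the interval bounds alone. If it does not, I would instead substitute the explicit polynomial relation of \Cref{lem:bifpoint}(i). Failing that, I would argue indirectly: the sign of $\partial_\lambda g$ along the fixed-point curve is determined by the implicit function $\lambda(r)$ defined by $g(r,\lambda)=r$, which has a strict local extremum at $r_c$. Then \Cref{lem:bifpoint}'s uniqueness of the critical point, combined with the facts that there is a single fixed point for small $\lambda$ and three for large $\lambda$, forces that extremum to be a minimum.
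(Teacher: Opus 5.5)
Your route is essentially the paper's. You check the hypotheses of \cite[Thm.~3.5]{Bac22}, get the signs of $\partial_\lambda g$ and $\partial_r^2 g$ at $(r_c,\lambda_c)$ by substituting the critical relation and using the bounds of \Cref{lem:bifpoint}, and orient the bifurcation by the quadratic normal form, which is equivalent to the paper's $\lambda''(r_c)=-\partial_r^2 g/\partial_\lambda g>0$. Working with $s=f_{b,\lambda}(r)$ and logarithmic derivatives is a tidier way to do the same algebra.

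The computations you left open close immediately from the interval bounds, so the ``main obstacle'' is not one. From $ab\,r_cs_c=(1+r_c)(1+s_c)$ you get $s_c=(1+r_c)/((ab-1)r_c-1)$, and hence
\[
1-(a-1)s_c=\frac{a\bigl((b-1)r_c-1\bigr)}{(ab-1)r_c-1}<0 .
\]
In your formula for $g''/g'$, use $g'=1$ and the same relation to rewrite $b/((1+r_c)(1+s_c))=1/(a r_c s_c)$. This gives
\[
\frac{\partial^2 g}{\partial r^2}(r_c,\lambda_c)=\frac{1}{r_c}-\frac{1}{a r_c s_c}-\frac{1}{1+r_c}=\frac{a+1-(ab-1)r_c}{a r_c(1+r_c)}>0 .
\]
Both signs follow from $1/(ab-1)<r_c<1/(b-1)<(a+1)/(ab-1)$. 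Note that the factor $(b-1)r_c-1$ you guessed for nondegeneracy actually governs transversality. Nondegeneracy is governed by $a+1-(ab-1)r_c$. These are exactly the paper's two sign identities, and your conclusion $g''(r_c)>0$ survives because the same bound $r_c<1/(b-1)$ implies both.

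You should drop the indirect fallback. The implicit function $\lambda(r)$ already presupposes $\partial_\lambda g\neq 0$. Also, in the paper the one-versus-three fixed-point count below and above $\overline{\lambda}$ is derived from this very lemma, so that argument would be circular.
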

\begin{proof}
    Let $(r_c, \lambda_c)$ be the critical fixed point defined in \Cref{lem:bifpoint}, and write $H(r,\lambda)=g(r,\lambda)-r$. Since $g$ is smooth near this positive critical point, the saddle-node criterion~\cite[Thm.~3.5]{Bac22} applies once, in addition to \Cref{eq:fixedptbifpt,eq:critbifpt}, the following conditions are verified:
\begin{enumerate}
    \item\label[condition]{conditionfirstder} $\frac{\partial g}{\partial\lambda}(r_c, \lambda_c) \ne 0$, and
    \item\label[condition]{conditionsecondder} $\frac{\partial^2 g}{\partial r^2}(r_c, \lambda_c) \ne 0$.
\end{enumerate}
We have
\[\frac{\partial g}{\partial \lambda}
=\frac{(1+r)^{ab}((1-a)\lambda+(1+r)^b)}{(\lambda+(1+r)^b)^{a+1}}\]
and
\[\frac{\partial^2g}{\partial r^2}
=\frac{ab\lambda^2(1+r)^{ab-2}}{(\lambda+(1+r)^b)^{a+2}}
\left((ab-1)\lambda-(b+1)(1+r)^b\right).\]
Substituting the formula for $\lambda_c$ from \Cref{lem:bifpoint} and using
\[r_c<\frac1{b-1}<\frac{a+1}{ab-1}\]
gives
\begin{align*}
\operatorname{sgn}\left(\frac{\partial g}{\partial\lambda}(r_c,\lambda_c)\right)
&=\operatorname{sgn}((b-1)r_c-1)<0,\\
\operatorname{sgn}\left(\frac{\partial^2g}{\partial r^2}(r_c,\lambda_c)\right)
&=\operatorname{sgn}(a+1-(ab-1)r_c)>0.
\end{align*}
Thus, \Cref{conditionfirstder,conditionsecondder} hold and the bifurcation is a saddle-node. Moreover, the equation $H(r,\lambda)=0$ locally defines $\lambda$ as a function of $r$, and implicit differentiation gives
\[\lambda'(r_c)=0,
\qquad
\lambda''(r_c)=-\frac{\frac{\partial^2}{\partial r^2}H(r_c,\lambda_c)}{\frac{\partial}{\partial\lambda}H(r_c,\lambda_c)}>0.\]
Consequently, the bifurcating pair occurs for $\lambda>\lambda_c$; its lower branch is attracting and its upper branch is repelling.
\end{proof}

\Cref{lem:saddlenode} shows that, as $\lambda$ increases through the bifurcation value, a pair of fixed points with opposite stability appears, as illustrated in \Cref{fig:biarybif}. By contrast, the dynamical system associated with the infinite $d$-ary tree undergoes a pitchfork bifurcation, as depicted in \Cref{fig:arybif}. We are now ready to prove \Cref{thm:abtreedynamics,thm:abfirstorder}.
\begin{proof}[Proof of \Cref{thm:abtreedynamics}]
Let $\overline{\lambda}=\lambda_c$ be the value from \Cref{lem:bifpoint}, and again write $H(r,\lambda)=g(r,\lambda)-r$, displaying the dependence of $g$ on $\lambda$. For $\lambda>0$, we have $H(0,\lambda)>0$ and $H(r,\lambda)\to-\infty$ as $r\to\infty$. Moreover, every fixed point satisfies $0<r<\lambda$, and $H(\,\cdot\,,\lambda)$ has at most three zeros. By \Cref{lem:bifpoint}, $(r_c,\overline{\lambda})$ is the only multiple fixed point over all $\lambda>0$. Since the fixed points remain bounded on bounded parameter intervals, their number can therefore change only at $\overline{\lambda}$. For sufficiently small positive $\lambda$, strict concavity gives exactly one fixed point. The oriented saddle-node in \Cref{lem:saddlenode} then gives exactly one fixed point for $0<\lambda<\overline{\lambda}$ and exactly three for $\lambda>\overline{\lambda}$. When $\lambda=0$, the map is identically zero and again has a unique fixed point.

All fixed points away from $\overline{\lambda}$ are simple. Since $H$ is positive at zero and negative at infinity, its derivative at the unique fixed point below $\overline{\lambda}$ is negative. Above $\overline{\lambda}$, the derivatives of $H$ at the three fixed points alternate in sign as negative, positive, and negative. Since $g'>0$, this proves the stability assertions in \Cref{itm:abarytreeunique,itm:abarytreecoexist}. If $0<\lambda<\overline{\lambda}$ and $r<r^1$, then $r<g(r)<r^1$, while if $r>r^1$, then $r^1<g(r)<r$. Thus, the iterates converge monotonically to $r^1$ from every initial value in $[0,\infty]$. For $\lambda=0$, convergence occurs in one step. This proves the basin assertion in \Cref{itm:abarytreeunique}.

\begin{figure}[htb]
    \centering
    \subfloat[$f_{a,\lambda}\circ f_{b,\lambda}$]{\includesvg[width=0.40\linewidth]{figures/2_3_FOPT.svg}}\qquad
    \subfloat[$f_{b,\lambda}\circ f_{a,\lambda}$]{\includesvg[width=0.40\linewidth]{figures/3_2_FOPT.svg}}
    \caption{The bifurcation diagram for the dynamical system $r_{n+1} = f_{a,\lambda}\circ f_{b,\lambda}(r_n)$, where $a = 2$, $b = 3$, and $\overline{\lambda} \approx 11.002$.
    Solid lines indicate attracting fixed points and dashed lines indicate repelling fixed points.
    Note that the $r$ axis has been rescaled by the map $r\mapsto \frac{r}{1+r} \in [0,1]$ for convenience.}
    \label{fig:biarybif}
\end{figure}

At $\lambda=\overline{\lambda}$, \Cref{lem:saddlenode} gives $\frac{\partial^2}{\partial r^2}H(r_c,\overline{\lambda})>0$, so $r_c$ is a strict local minimum of $H(\,\cdot\,,\overline{\lambda})$ at height zero. Since $H(r,\overline{\lambda})\to-\infty$, there is a fixed point $r^1>r_c$. Moreover, $H$ is positive both at zero and immediately to the left of $r_c$. Any fixed point below $r_c$ would therefore force at least two such fixed points, contradicting the at-most-three-fixed-points property after including $r_c$ and $r^1$. Thus, these are the only two fixed points. The fixed point $r^1$ is a downward crossing, and hence $0<g'(r^1)<1$. Setting $r^0=r_c$ proves \Cref{itm:abarytreecrit}.

As $\lambda\downarrow\overline{\lambda}$, the lower saddle-node branch satisfies $r^0(\lambda)\to r_c$. The larger fixed point at $\overline{\lambda}$ is simple, so the implicit function theorem gives $r^1(\lambda)\to r^1(\overline{\lambda})$. Consequently,
\[\lim_{\lambda\downarrow\overline{\lambda}}
|r^1(\lambda)-r^0(\lambda)|
=r^1(\overline{\lambda})-r_c>0,\]
which proves \Cref{itm:abarytreeorder}.

Finally, $r\mapsto f_{b,\lambda}(r)$ is an order-reversing bijection from the fixed points of $g$ to those of $h$, with inverse $s\mapsto f_{a,\lambda}(s)$. At corresponding fixed points,
\[h'(f_{b,\lambda}(r))=g'(r).\]
This proves all the claims in \Cref{itm:abflipped}.
\end{proof}

\begin{proof}[Proof of \Cref{thm:abfirstorder}]
    Let $u$ and $v$ be arbitrary vertices of degrees $a+1$ and $b+1$, respectively, in $\TT_{a+1,b+1}$. Then $\TT_{a+1,b+1}-u$ is the disjoint union of $a+1$ rooted copies of the $(b,a)$-ary tree $T_{b,a}$, in which vertices at even depth have $b$ children and vertices at odd depth have $a$ children. Similarly, $\TT_{a+1,b+1}-v$ is the disjoint union of $b+1$ copies of $T_{a,b}$.

    By \Cref{prop:rationbrs} (applied to finite truncations), the root occupation ratios of $T_{b,a}$ and $T_{a,b}$ can be found by iterating the dynamical systems studied in \Cref{thm:abtreedynamics}, given by the functions $h$ and $g$, respectively. To obtain the ratios for $u$ and $v$ in $\TT_{a+1,b+1}$, we apply the functions $f_{a+1,\lambda}$ and $f_{b+1,\lambda}$ to these root ratios from $T_{b,a}$ and $T_{a,b}$, respectively.

    At $\lambda=0$, the hard-core measure is concentrated on the empty independent set and is unique. For $0<\lambda<\overline{\lambda}$, the global basin assertion in \Cref{itm:abarytreeunique} makes the two extremal boundary limits coincide, so the monotonicity of \Cref{lem:boundarymono} gives uniqueness on $\TT_{a+1,b+1}$. For $\lambda>\overline{\lambda}$, iteration from the two extremal boundary conditions converges to the two outer fixed points, giving distinct extremal hard-core measures and hence phase coexistence.

    It remains to verify the order of the transition in the marginal coordinates of \Cref{def:orders}. Let $m(x)=x/(1+x)$ and take $v$ to have degree $b+1$. For brevity we suppress the dependence of $r^0$ and $r^1$ on $\lambda$. The extremal-measure sandwich following \Cref{lem:boundarymono} gives, for $\lambda>\overline{\lambda}$,
    \[D_{\TT_{a+1,b+1},v}(\lambda)
    =\left|
    m\bigl(f_{b+1,\lambda}(r^0)\bigr)
    -m\bigl(f_{b+1,\lambda}(r^1)\bigr)
    \right|.\]
    The corresponding formula using $h$ and $f_{a+1,\lambda}$ holds at vertices of degree $a+1$, so the one-sided limit in \Cref{def:orders} exists at every vertex. By the branch limits established in the proof of \Cref{thm:abtreedynamics},
    \begin{align*}
    \lim_{\lambda\downarrow\overline{\lambda}}
    D_{\TT_{a+1,b+1},v}(\lambda)
    &=\left|
    m\bigl(f_{b+1,\overline{\lambda}}(r_c)\bigr)
    -m\bigl(f_{b+1,\overline{\lambda}}(r^1)\bigr)
    \right|\\
    &>0,
    \end{align*}
    since $r_c<r^1$ and $f_{b+1,\overline{\lambda}}$ is strictly decreasing. Thus, $\overline{\lambda}$ is a first-order phase transition.
\end{proof}

\bibliographystyle{habbrv}
\bibliography{bib}

\end{document}